\renewcommand{\int}{\text{int}}
\newcommand{\ext}{\text{ext}}
\begin{document}

\authorlior\authordavid

\title{Extrinsic Diophantine approximation on manifolds and fractals}

\begin{abstract}
Fix $d\in\mathbf N$, and let $S\subseteq\amsbb R^d$ be either a real-analytic manifold or the limit set of an iterated function system (for example, $S$ could be the Cantor set or the von Koch snowflake). An \emph{extrinsic} Diophantine approximation to a point $\mathbf x\in S$ is a rational point $\mathbf p/q$ close to $\mathbf x$ which lies \emph{outside} of $S$. These approximations correspond to a question asked by K. Mahler ('84) regarding the Cantor set. Our main result is an extrinsic analogue of Dirichlet's theorem. Specifically, we prove that if $S$ does not contain a line segment, then for every $\mathbf x\in S\setminus\amsbb Q^d$, there exists $C > 0$ such that infinitely many vectors $\mathbf p/q\in \amsbb Q^d\setminus S$ satisfy $\|\mathbf x - \mathbf p/q\| < C/q^{(d + 1)/d}$. As this formula agrees with Dirichlet's theorem in $\amsbb R^d$ up to a multiplicative constant, one concludes that the set of rational approximants to points in $S$ which lie outside of $S$ is  large. Furthermore, we deduce extrinsic analogues of the Jarn\'ik--Schmidt and Khinchin theorems from known results.
\end{abstract}

\subjclass[2010]{11J13, 11H06, 28A80}
\keywords{Diophantine approximation, fractals, iterated function systems}

\maketitle

\section{Introduction}

\ignore{
In 1984, K. Mahler published a paper entitled ``Some suggestions for further research'' \cite{Mahler}, in which he writes the following moving statement: ``At the age of 80 I cannot expect to do much more mathematics. I may however state a number of questions where perhaps further research might lead to interesting results''. One of these questions was regarding Diophantine approximation on the Cantor set.\footnote{In this paper, the phrase ``Cantor set'' always refers to the ternary Cantor set.} In Mahler's words, ``How close can irrational elements of Cantor's set be approximated by rational numbers

\begin{enumerate}
\item In Cantor's set, and
\item By rational numbers not in Cantor's set?''
\end{enumerate}
We shall refer to approximations of the first kind as \emph{intrinsic}, and to those of the second kind as \emph{extrinsic}. Also interesting is the case where the approximations may come from either inside or outside the Cantor set; in this case the approximations will be called \emph{classical}. More generally, given $d\in\N$ and a set $S\subset\R^d$, one may consider intrinsic, extrinsic, and classical approximations to points in $S$.
}% end ignore

Fix $d\in\N$ and a set $S\subset\R^d$. One may divide the set of rational points into two disjoint classes: the class of rational points which lie on $S$, and the class of rational points which lie outside of $S$. Approximating points in $S$ by rational points in $S$ is called \emph{intrinsic} approximation, while approximating points in $S$ by rational points outside of $S$ is called \emph{extrinsic} approximation. More well-studied is the case where the approximations may come from either inside or outside $S$; in this case the approximations will be called \emph{ambient}.

We shall be particularly interested in two classes of sets: $S$ may be either the limit set of an iterated function system or a real-analytic manifold. Of particular prominence is the Cantor set,\footnote{In this paper, the phrase ``Cantor set'' always refers to the ternary Cantor set.} of which K. Mahler \cite{Mahler} asked: ``How close can irrational elements of Cantor's set be approximated by rational numbers (a)  In Cantor's set, and (b) By rational numbers not in Cantor's set?'' In our terminology, Mahler is asking about intrinsic and extrinsic approximation on the Cantor set, respectively. For both the limit sets of iterated function systems and for manifolds, there is already literature on both intrinsic and ambient approximation; see for example \cite{Beresnevich_Khinchin, BFKRW, FKMS, FishmanSimmons1} and the references therein. By contrast, extrinsic approximation on algebraic varieties has been studied only briefly, in \cite[Lemma 1]{DickinsonDodson}, \cite[Lemma 4.1.1]{Drutu}, and \cite[Lemma 1]{BDL}. Each of these papers proved a lemma which stated that extrinsic rational approximations to points on algebraic varieties cannot be too close to the points they approximate.% We will state a more general version of this principle as Theorem \ref{theoremjarnikextrinsic}.

In this paper, we analyze the theory of extrinsic approximation in more detail. Our main result (Theorem \ref{theoremdirichlet}) is an extrinsic analogue of Dirichlet's theorem. We also describe results concerning extrinsic approximation which may be deduced from their intrinsic and ambient counterparts, namely analogues of the Jarn\'ik--Schmidt theorem and Khinchin's theorem.

{\bf Convention 1.} The symbols $\lesssim$, $\gtrsim$, and $\asymp$ will denote multiplicative asymptotics. For example, $A\lesssim_K B$ means that there exists a constant $C > 0$ (the \emph{implied constant}), depending only on $K$, such that $A\leq C B$. In general, dependence of the implied constant(s) on universal objects such as the set $S$ will be omitted from the notation.

\textbf{Acknowledgements.} The first-named author was supported in part by the Simons Foundation grant \#245708. The authors thank Barak Weiss for helpful suggestions.

\subsection{An extrinsic analogue of Dirichlet's theorem}
Our main theorem is as follows:

\begin{theorem}
\label{theoremdirichlet}
Fix $d\in\N$, and let $S\subset\R^d$ be either
\begin{itemize}
\item[(1)] the limit set of an iterated function system\footnote{In this paper all iterated function systems are finite and consist of similarities.} (cf. Definition \ref{definitionIFS}), or
\item[(2)] a real-analytic manifold,
\end{itemize}
and suppose that $S$ does not contain a line segment. Then for all $\xx\in S\butnot\Q^d$, there exists $C = C_\xx > 0$ such that infinitely many $\pp/q\in\Q^d\butnot S$ satisfy
\begin{equation}
\label{extrinsic}
\left\|\xx - \frac{\pp}{q}\right\| \leq \frac{C}{q^{1 + 1/d}}\cdot
\end{equation}
Here and elsewhere $\|\cdot\|$ denotes the max norm. Moreover, the function $\xx\mapsto C_\xx$ is bounded on compact sets.
\end{theorem}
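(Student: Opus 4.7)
My plan is to reduce the theorem to a quantitative ``hole'' property of $S$ and then convert holes into extrinsic rational approximations by a lattice-point-counting argument.

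The hole property I would aim to establish is the following: for each $\xx\in S$ there exists $c=c_\xx>0$, bounded below on compact subsets of $S$, such that for every sufficiently small $r>0$ there is a point $\mathbf{y}$ with $\|\mathbf{y}-\xx\|\le r$ and $B(\mathbf{y},cr)\cap S=\emptyset$. In the real-analytic manifold case this follows from the fact that $S$ has dimension strictly less than $d$ (else its interior is non-empty and hence contains a line segment): at a smooth point a normal translation produces the hole, and near the lower-dimensional singular set of $S$ one invokes a \L{}ojasiewicz-type inequality. In the IFS case the self-similarity $S=\bigcup_i f_i(S)$ reduces the problem to a single macroscopic hole: since $S$ has empty interior (again because $S$ contains no line segment), there exists a ball $B(\mathbf{z}_0,2c_0)\subset\R^d\setminus S$ near $S$, and pulling it back by a composition $f_\omega$ of contraction ratio $\asymp r$ with $\xx\in f_\omega(S)$ transports it to a hole at scale $r$ near $\xx$.

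To convert the hole into an extrinsic approximation, I would set up a lattice-point count in $\mathbb{Z}^{d+1}$. For $K>0$ and $r>0$, the standard count shows that the ball $B(\xx,Kr)$ contains $\asymp(K/c)^d$ rationals $\pp/q$ of denominator $q\le(cr)^{-d/(d+1)}$, and each such rational automatically satisfies $\|\xx-\pp/q\|\le Kr\le(K/c)/q^{1+1/d}$, which is the Dirichlet exponent with constant $C=K/c$. A Minkowski/pigeonhole argument should then force at least one of these Dirichlet-quality rationals into the hole $B(\mathbf{y},cr)$, producing $\pp/q\in\Q^d\setminus S$ satisfying \eqref{extrinsic}. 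Letting $r\to 0$ along a sequence and using $\xx\notin\Q^d$ yields infinitely many such approximations, and boundedness of $c_\xx$ on compacta transfers to boundedness of $C_\xx$.

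The main obstacle I foresee is the lattice-point step. A direct application of Minkowski to the hole alone yields only a rational of denominator $q\lesssim(cr)^{-d}$, which produces the weaker extrinsic exponent $1/d$ and not the desired $1+1/d$. Recovering the full exponent requires exploiting the ambient ball together with the hole and showing that the $\asymp(K/c)^d$ Dirichlet-quality rationals in $B(\xx,Kr)$ cannot all miss the hole; this in turn seems to require a strengthening of the hole property to a quantitative non-concentration statement, roughly that $S$ fails to cover a definite fraction of every ball $B(\xx,Kr)$ near $\xx$. Establishing this uniformly — particularly in the IFS case, where cylinders $f_{\omega'}(S)$ can overlap the pulled-back hole and must be ruled out using the no-line-segment hypothesis — is expected to be the technical heart of the argument.
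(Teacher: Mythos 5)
Your plan has a genuine gap at exactly the step you flag, and your proposed fix does not close it. The pigeonhole step fails not because $S$ might cover too much of the ball $B(\xx,Kr)$, but because the rationals $\pp/q$ of denominator $q\le(cr)^{-d/(d+1)}$ lying in $B(\xx,Kr)$ are \emph{not} equidistributed: they are constrained to a union of a few affine subspaces determined by the dual lattice, and a well-placed $S$ can in principle contain all of them while leaving a macroscopic $d$-dimensional hole. Thus a ``quantitative non-concentration'' hypothesis on $S$ of the type you propose still does not force a Dirichlet-quality rational into the hole; you would need information about \emph{where} the good rationals are, not just how many there are. Moreover, your hole property is $d$-dimensional, but the only a priori structure available on the set of Dirichlet-quality approximants at a given scale is one-dimensional, so a $d$-dimensional hole near $\xx$ is the wrong object to aim at.

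The paper's argument resolves precisely this by replacing counting with structure. Using Minkowski's second successive minimum it produces a \emph{good pair} $(\rr_0,\rr_\infty)\in(\Z^{d+1})^2$ of linearly independent lattice points, both giving Dirichlet-quality approximations with comparable denominators (Lemma~\ref{lemmagoodpair}). The arithmetic progression $\rr_i=\rr_0+i\rr_\infty$ in $\Z^{d+1}$ projectivizes to a sequence $\pp_i/q_i$ lying on a single line in $\R^d$, with \emph{every} term still Dirichlet-quality with constant depending only on $i$ (Claim~\ref{claimextrinsic2}), and the block $(\pp_i/q_i)_{i=N}^{2N}$ is a $2$-roughly arithmetic progression (Proposition~\ref{propositionroughlyarithmetic}). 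This reduces the theorem (Corollary~\ref{corollarydirichlet}) to showing $S$ contains no $2$-roughly arithmetic progression of some fixed length $N$ --- a one-dimensional porosity condition \emph{along lines}, which is exactly what the no-line-segment hypothesis supplies: in the IFS case via a compactness/Hausdorff-limit argument (Lemma~\ref{lemmahausdorfflimit} plus Lemma~\ref{lemmaOSC}), and in the real-analytic case via a finite-intersection-with-lines bound (Claim~\ref{claimrealanalytic}). So the missing idea in your proposal is the passage from a single Minkowski minimum to a \emph{pair} of minima generating a line of simultaneous approximants, and the corresponding shift from full-dimensional porosity of $S$ to porosity of its intersections with lines.
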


We recall that (the corollary of) Dirichlet's theorem in $\R^d$ states that for all $\xx\in\R^d\butnot\Q^d$, there exist infinitely many $\pp/q\in\Q^d$ satisfying \eqref{extrinsic} with $C = 1$. Thus Theorem \ref{theoremdirichlet} says that if $S$ is as above, then for each $\xx\in S\butnot\Q^d$ there are enough extrinsic approximations to $\xx$ to re-prove Dirichlet's theorem, if one is content with a constant multiplicative error term.\footnote{An analogue of Dirichlet's theorem for which the function $\xx\mapsto C_\xx$ is unbounded was already considered in \cite[Theorem 8.1]{FKMS}. In the present case the situation is somewhat better, since the function $\xx\mapsto C_\xx$ is bounded on compact sets.} This can be contrasted with the situation for intrinsic approximation, where the best theorem that one can prove using intrinsic rationals is much worse than Dirichlet's theorem \cite[Theorem 4.3]{FKMS}.

As a concrete application of Theorem \ref{theoremdirichlet}, we consider the case where $S$ is the Cantor set, thus giving an answer to the second of Mahler's questions mentioned above. Since the Cantor set is compact and does not contain a line segment, the following is an immediate corollary of Theorem \ref{theoremdirichlet}:

\begin{corollary}
\label{corollarycantor}
Let $K$ denote the Cantor set. There exists $C > 0$ such that for each $x\in K\butnot\Q$, there exist infinitely many $p/q\in\Q\butnot K$ satisfying
\begin{equation}
\label{extrinsic1dim}
\left|x - \frac{p}{q}\right| \leq \frac{C}{q^2}\cdot
\end{equation}
\end{corollary}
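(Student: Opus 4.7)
The plan is to derive the corollary as a direct specialization of Theorem \ref{theoremdirichlet} to $d=1$ and $S = K$. The first task is to verify the two structural hypotheses of that theorem. The Cantor set $K$ is the limit set of the iterated function system $\{\phi_0,\phi_1\}$ on $\R$ given by $\phi_0(x) = x/3$ and $\phi_1(x) = x/3 + 2/3$, each of which is a contracting similarity, so hypothesis (1) holds. The no--line--segment hypothesis is immediate: $K$ has Lebesgue measure zero and is totally disconnected, so it contains no nondegenerate interval.

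With the hypotheses verified, Theorem \ref{theoremdirichlet} furnishes, for each $x \in K \butnot \Q$, a positive constant $C_x$ such that infinitely many $p/q \in \Q \butnot K$ satisfy $|x - p/q| \leq C_x / q^{(1+1)/1} = C_x/q^2$, which is exactly the shape of \eqref{extrinsic1dim}. This already gives the corollary with a pointwise, rather than uniform, constant.

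To promote the pointwise constants to the single constant $C$ asserted in the corollary, I would invoke the ``moreover'' clause of Theorem \ref{theoremdirichlet}, namely that the function $x \mapsto C_x$ can be chosen to be bounded on compact sets. Since $K$ is itself a compact subset of $\R$, applying this with the compact set $K$ yields $C := \sup_{x \in K \butnot \Q} C_x < \infty$, and this finite constant works simultaneously for every $x \in K \butnot \Q$.

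There is essentially no obstacle to overcome: the excerpt itself labels the corollary ``an immediate corollary'' of Theorem \ref{theoremdirichlet}, and the derivation really does consist of nothing more than (i) exhibiting $K$ as an IFS limit set, (ii) noting that $K$ is compact and contains no interval, (iii) specializing the exponent $(d+1)/d$ to $d=1$, and (iv) invoking the uniformity on compact sets to extract a single constant from the family $\{C_x\}_{x \in K \butnot \Q}$.
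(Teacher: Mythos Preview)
Your proposal is correct and matches the paper's own justification essentially verbatim: the paper simply observes that the Cantor set is compact and contains no line segment, and then invokes Theorem~\ref{theoremdirichlet} together with the boundedness of $x\mapsto C_x$ on compact sets to obtain the uniform constant. There is nothing to add.
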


\begin{remark*}
The hypotheses of Theorem \ref{theoremdirichlet} can be weakened. Recall that for $\epsilon > 0$, a subset $S$ of a metric space $X$ is said to be \emph{$\epsilon$-porous} relative to $X$ if for every ball $B(x,r)\subset X$, there exists a ball $B(y,\epsilon r)\subset B(x,r)$ which is disjoint from $S$.  Using this definition, Theorem \ref{theoremdirichlet} can be generalized as follows:
\end{remark*}

\begin{theorem}
\label{theoremdirichletv2}
Fix $d\in\N$, and let $K\subset\R^d$ be a compact set whose intersection with every line $L\subset\R^d$ is $\epsilon$-porous relative to $L$, for some $\epsilon > 0$ independent of $L$. Then there exists $C > 0$ such that for all $\xx\in K\butnot\Q^d$, there are infinitely many $\pp/q\in\Q^d\butnot K$ satisfying \eqref{extrinsic}.
\end{theorem}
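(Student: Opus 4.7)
The plan is to combine a standard lattice-point count with the porosity hypothesis, reducing the argument to establishing a more robust form of porosity from the given line-porosity. Observe first that if $K$ were $\epsilon'$-porous in $\R^d$ (in the full $d$-dimensional sense of the remark preceding Theorem \ref{theoremdirichletv2}), then the conclusion would follow quickly: given $\xx\in K\butnot\Q^d$ and a large integer $Q$, set $r = C/Q^{1+1/d}$ for a large constant $C = C(d,\epsilon')$ to be chosen; by ball-porosity there exists $B(\mathbf y,\epsilon' r)\subset B(\xx, r)$ disjoint from $K$; a standard lattice-point count shows that the number of rationals in $B(\mathbf y,\epsilon' r)$ of denominator at most $Q$ is $\asymp (\epsilon' r Q)^d \cdot Q = (\epsilon' C)^d$, which is positive for $C$ large enough. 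Any such rational $\pp/q$ lies in $\Q^d\butnot K$ and satisfies $\|\xx - \pp/q\| \leq r \leq C/q^{1+1/d}$; letting $Q$ vary over an unbounded sequence then produces infinitely many distinct approximants.

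It therefore suffices to upgrade the hypothesis of line-porosity on every line to full ball-porosity in $\R^d$ with some constant $\epsilon' = \epsilon'(d,\epsilon) > 0$. The natural approach is induction on the dimension $d$: given a ball $B(\xx, r)$, use line-porosity along a coordinate direction $\mathbf e_1$ to locate a point $\mathbf y$ on the line $\xx + \R\mathbf e_1$ such that an $\mathbf e_1$-segment of radius $\epsilon r$ centered at $\mathbf y$ misses $K$; then apply the inductive hypothesis to the $(d-1)$-dimensional slice $K \cap (\mathbf y + \mathbf e_1^\perp)$, which inherits line-porosity with the same constant $\epsilon$ since any line in the hyperplane is also a line in $\R^d$. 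Composing the two gives a candidate lower-dimensional gap; one then fattens it into a genuine $d$-dimensional ball using line-porosity in the remaining direction.

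The main technical obstacle is precisely this last fattening step: the iteration produces nested lower-dimensional slices disjoint from $K$, but their product is an axis-aligned box whose interior is not automatically disjoint from $K$, since at each step we controlled only a single slice and not a full tubular neighborhood. Resolving this requires exploiting the uniformity of the line-porosity hypothesis across \emph{all} directions (not merely a fixed orthonormal frame): any ``leak'' of $K$ into the candidate box across an oblique direction can itself be eliminated by applying line-porosity in that very direction to the leaked point and refining the selection of $\mathbf y$. Carrying out this refinement while verifying that the resulting ball-porosity constant $\epsilon'$ depends only on $d$ and $\epsilon$, and not on $\xx$ or $r$, is the crux of the argument; once this is done the approximation conclusion follows from the lattice count described in the first paragraph.
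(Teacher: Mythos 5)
There are two genuine gaps here, and they compound. The lattice-point count in your first paragraph does not in fact produce a rational. With $r = C/Q^{1+1/d}$ and $\rho := \epsilon' r$, the formula ``number of rationals of denominator $\leq Q$ in $B(\yy,\rho)$ is $\asymp (\rho Q)^d\cdot Q$'' is an asymptotic that is valid only when $\rho Q \gg 1$, i.e., when $\epsilon'C \gg Q^{1/d}$; for fixed $C$ and large $Q$ it degenerates, and the number of rationals in a specific ball of radius $\asymp Q^{-1-1/d}$ can perfectly well be zero (e.g.\ when $\yy$ happens to sit near a low-denominator rational). Dirichlet's theorem applied to $\yy$ gives a rational $\pp/q$ with $q\leq Q$ and $\|\yy - \pp/q\| < 1/(qQ^{1/d})$, but for this to land inside $B(\yy,\rho)$ one would need $q > Q/(\epsilon'C)$, which Dirichlet does not provide. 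So even if $K$ were ball-porous, the first paragraph is not a proof. The second and larger gap is the reduction from uniform line-porosity to ball-porosity, which you explicitly flag as ``the crux'' and leave open; the inductive/fattening sketch does not close it, and in any case the bookkeeping required to obtain a dimension-independent $\epsilon'$ is not carried out.

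The paper's argument inverts the order of quantifiers, which is what lets it avoid both obstructions. Rather than first locating a gap and then hoping a rational lands in it, one first manufactures \emph{many} good rational approximants constrained to lie on a single line, and only then invokes porosity to push one of them out of $K$. Concretely, Lemma \ref{lemmagoodpair} (a consequence of Minkowski's second theorem on successive minima, applied to the lattices $g_t T_\xx \Z^{d+1}$) produces two linearly independent vectors $\rr_0=(\pp_0,q_0)$, $\rr_\infty=(\pp_\infty,q_\infty)$ in $\Z^{d+1}$ that are both good Dirichlet-quality approximants with $q_\infty\leq q_0$ arbitrarily large. The integer arithmetic progression $\rr_i=\rr_0+i\rr_\infty$ projectivizes to rational points $\pp_i/q_i$ that all lie on one line (Observation \ref{observationline}), all satisfy $\|\xx-\pp_i/q_i\|\leq((1+i)/q_i)^{1+1/d}$ (Claim \ref{claimextrinsic2}), and for $N\leq i\leq 2N$ form a $2$-roughly arithmetic progression (Proposition \ref{propositionroughlyarithmetic}). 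The porosity hypothesis is then used only in one dimension, along the relevant line: a $2$-roughly arithmetic progression of length $N$ in $K\cap L$ has consecutive spacing $\lesssim\|\vv\|$ while filling a segment of length $\gtrsim N\|\vv\|$, so $\epsilon$-porosity of $K\cap L$ forces $N\lesssim 1/\epsilon$. For $N$ larger than this threshold, some $\pp_i/q_i$ must fall in a gap, giving the extrinsic approximant with $C=(1+2N)^{1+1/d}$. This is exactly the equivalence with Corollary \ref{corollarydirichlet} the paper alludes to. If you want to salvage your plan, the key missing idea is to replace the single-rational existence claim with the two-approximant statement of Lemma \ref{lemmagoodpair}; once you have a line's worth of approximants, the line-porosity hypothesis can be applied directly and the ball-porosity question becomes moot.
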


Theorem \ref{theoremdirichletv2} is readily seen to be equivalent to Corollary \ref{corollarydirichlet} below. The deduction of Theorem \ref{theoremdirichlet} from Corollary \ref{corollarydirichlet} (or equivalently from Theorem \ref{theoremdirichletv2}) is given in Section \ref{sectiondirichlet}.

We remark that the condition on $K$ is satisfied whenever $K$ is the support of an Ahlfors regular measure of dimension strictly less than 1.\footnote{A measure $\mu$ on a metric space $X$ is said to be \emph{Ahlfors regular of exponent $\delta$} if there exists $C > 0$ such that for all $x\in \Supp(\mu)$ and $0 < r \leq 1$, $(1/C)r^\delta \leq \mu(B(x,r)) \leq C r^\delta$.} Moreover, if $d = 1$ the condition just reduces to $K$ itself being porous, a fairly weak geometric condition (for example it is closed under quasiconformal maps).

\subsection{The line segment hypothesis}
A key hypothesis of Theorem \ref{theoremdirichlet} is that the set $S$ does not contain a line segment.
This hypothesis is not at all automatic; there exist examples of both manifolds and fractals which contain line segments.
In the case of fractals, the Sierpinski triangle and the Sierpinski carpet are two examples of well-known fractals each of which contains a line segment. In the case of manifolds, there are numerous examples in $\R^3$ of so-called ``ruled surfaces'' which are in fact the union of lines.

In light of the above facts, one might ask whether the line segment hypothesis can be removed. However, it is necessary for the following simple reason:
\begin{observation}
If $S$ contains a rational line segment $L$, then the conclusion of Theorem \ref{theoremdirichlet} cannot hold.
\end{observation}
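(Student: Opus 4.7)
The plan is to exhibit a specific $\xx\in L\butnot\Q^d$ for which only finitely many $\pp/q\in\Q^d\butnot S$ can satisfy \eqref{extrinsic}, forcing the conclusion of Theorem \ref{theoremdirichlet} to fail at $\xx$. Let $\mathcal{L}\subset\R^d$ denote the affine line containing $L$, which is a rational affine line by hypothesis, and choose $\xx$ to be any irrational point in the relative interior of $L$ (such points are dense in $L$).

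I would then split any hypothetical sequence of extrinsic approximations $\pp/q\to\xx$ satisfying \eqref{extrinsic} into two cases. In the case $\pp/q\in\mathcal{L}$: since $\xx$ lies in the relative interior of $L$ inside $\mathcal{L}$, every $\pp/q\in\mathcal{L}$ sufficiently close to $\xx$ must itself lie in $L\subseteq S$, contradicting $\pp/q\notin S$; so only finitely many $\pp/q$ can lie on $\mathcal{L}$. This case alone finishes $d=1$, where $\mathcal{L}=\R$.

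For the remaining case $\pp/q\notin\mathcal{L}$ (which only occurs when $d\geq 2$), the key ingredient is the standard fact that a rational point $\pp/q$ not lying on a rational affine line $\mathcal{L}$ has $\operatorname{dist}(\pp/q,\mathcal{L})\gtrsim 1/q$, with implied constant depending only on $\mathcal{L}$. To see this, express $\mathcal{L}$ as the intersection of $d-1$ rational hyperplanes $H_1,\ldots,H_{d-1}$; since $\pp/q\notin\mathcal{L}$, some $H_i$ does not contain $\pp/q$, and evaluating the (integral, after clearing denominators) defining linear equation of $H_i$ at $\pp/q$ produces a nonzero rational with denominator a fixed multiple of $q$, hence of absolute value $\gtrsim 1/q$; this yields distance $\gtrsim 1/q$ from $H_i$, and therefore from $\mathcal{L}$. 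Combining with \eqref{extrinsic} gives $1/q\lesssim C/q^{1+1/d}$, i.e.\ $q^{1/d}\lesssim C$, so $q$ is bounded, and for bounded $q$ only finitely many $\pp/q$ lie within unit distance of $\xx$.

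The main (and only real) obstacle is the off-line estimate in the $d\geq 2$ case, which reduces to the routine integer-distance argument sketched above; everything else is essentially an observation about interior points of $L$.
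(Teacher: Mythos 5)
Your proof is correct and follows the same basic strategy as the paper's: reduce to the fact that a rational point $\pp/q$ not on a rational line is at distance $\gtrsim 1/q$ from it, which contradicts \eqref{extrinsic} once $q$ is large. The paper states the distance bound with respect to the segment $L$ directly and doesn't make the case split or spell out the hyperplane argument; you split off the case $\pp/q\in\mathcal{L}$ (handled by choosing $\xx$ in the relative interior) and supply the integer-denominator justification for the off-line estimate, which is a slightly more careful presentation of the same idea rather than a different route.
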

\begin{proof}
Fix $\xx\in L\butnot\Q^d \subset S\butnot \Q^d$. Then for all $\pp/q\in\Q^d\butnot S\subset \Q^d\butnot L$,
\[
\left\|\xx - \frac{\pp}{q}\right\| \geq \dist\left(\frac{\pp}{q},L\right) \gtrsim_L \frac{1}{q}\cdot
\]
For $q$ sufficiently large, this contradicts \eqref{extrinsic}.
\end{proof}
\noindent In particular, the Sierpinski triangle and the Sierpinski carpet mentioned above each contain the interval $[0,1]$ viewed as a subset of the $x$-axis, which is a rational line segment. Thus they cannot satisfy the conclusion of Theorem \ref{theoremdirichlet}.

It is therefore a relevant question which manifolds and fractals contain a line segment, and which do not.
In the case of manifolds, the condition can be translated into a differential condition, which we implicitly do in the proof of Claim \ref{claimrealanalytic} (cf. Remark \ref{remarkCinfty}).
In the case of fractals, the condition is somewhat harder to check directly.
On the other hand, many fractals are totally disconnected; no such fractal can contain a line segment.
To give an example of how one can check that a fractal $\Lambda$ does not contain a line segment in a case where $\Lambda$ is not totally disconnected, we demonstrate the following:

\begin{proposition}
\label{propositionvonkoch}
The von Koch snowflake curve does not contain a line segment.
\end{proposition}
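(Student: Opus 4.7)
The plan is to combine the self-similarity of the von Koch curve $K$ with an angle computation at its junction points. Recall $K$ is the attractor of an IFS $\{f_1,f_2,f_3,f_4\}$ of similarities with contraction ratio $1/3$, where $f_1,f_4$ are pure translation-scalings and $f_2,f_3$ additionally rotate by $\pm 60^\circ$. These are arranged so that $K=\bigcup_i f_i(K)$, the pieces $f_i(K)$ are consecutive arcs of a simple curve, and $f_i(K)\cap f_j(K)$ is empty for $|i-j|>1$ and consists of a single ``junction'' $j_i$ for $|i-j|=1$. Also, $K$ is contained in the closed triangle $T$ with vertices $(0,0)$, $(1,0)$, $(1/2,\sqrt{3}/6)$, since $T\supseteq\bigcup_i f_i(T)$ and so monotonicity of the Hutchinson operator gives $K\subseteq T$. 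In particular each $f_i(K)\subseteq f_i(T)$, so near $j_i$ the curve $K$ is confined to the two adjacent sub-triangles $f_i(T)$ and $f_{i+1}(T)$.

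The key claim is that a non-degenerate line segment $L\subseteq K$ cannot contain any junction $j_i$ in its interior. If it did, the two halves of $L$ on either side of $j_i$ would lie in $f_i(T)$ and $f_{i+1}(T)$ respectively, so the pair of opposite rays emanating from $j_i$ along $L$ would have to fit one into each of the directional cones these sub-triangles subtend at $j_i$. A direct trigonometric computation shows that these cones are angularly incompatible: at $j_1=(1/3,0)$, for instance, $f_1(T)$ and $f_2(T)$ subtend the cones $[150^\circ,180^\circ]$ and $[60^\circ,90^\circ]$, and no antipodal pair of rays from $j_1$ fits one in each cone. The junctions $j_2$ and $j_3$ are handled analogously (or by the reflection symmetry $x\mapsto 1-x$ of $K$).

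Since $L$ is connected, avoids all junctions in its interior, and is covered by the closed sets $f_i(K)$ whose pairwise intersections are exactly the points $j_i$, it follows that $L$ lies in a single piece $f_{i_0}(K)$. Then $f_{i_0}^{-1}(L)\subseteq K$ is a line segment of length $3|L|$, and iterating produces line segments in $K$ of arbitrarily large length---contradicting $K\subseteq T$. The main obstacle is the angular incompatibility verification at each junction; this is elementary trigonometry, but requires pinning down the exact directional cones via the explicit form of the maps $f_i$, and one should confirm that the argument still applies when a junction lies at the boundary of $L$ rather than its interior (in which case $L$ is already contained in a single $f_i(K)$, so the scaling step proceeds directly).
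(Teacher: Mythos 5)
Your proof is correct and takes a genuinely different route from the paper's. The paper proceeds by an extremal argument: it selects a line segment $L\subseteq\Lambda$ that first maximizes the number of endpoints lying on $\bigcup_a\partial(u_a(W))$ (where $W$ is the equilateral OSC triangle) and then maximizes length; it shows both endpoints of such an $L$ must lie on the nine boundary segments $L_1,\ldots,L_9$ of the first-level triangles, and rules out three cases for where those endpoints sit, using structural facts such as $\Lambda\cap\{\Re z=1/2\}$ being a singleton and $\Lambda$ meeting the real axis in the Cantor set. Your argument replaces this case split with a uniform directional-cone criterion: using the flatter invariant triangle $T$ with apex $(1/2,\sqrt 3/6)$ (apex angle $120^\circ$) rather than $W$, you compute the cones subtended by the adjacent pieces $f_i(T)$ and $f_{i+1}(T)$ at each junction, observe that they contain no antipodal pair of directions, conclude that no segment in $K$ passes through a junction, hence that every segment lies in a single first-level piece $f_{i_0}(K)$, and then blow up by $f_{i_0}^{-1}$ to contradict boundedness of $K$. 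This is more symmetric and the cone-incompatibility criterion is a reasonable candidate for the generalization the paper leaves as an open question; on the other hand both arguments lean on nontrivial structure of the von Koch curve (yours needs that adjacent pieces meet in exactly one point and non-adjacent pieces are disjoint, the paper's needs the Cantor-set slice). Two points worth making explicit in a polished write-up: you should also rule out both rays of $L$ at a junction landing in the \emph{same} cone (immediate, as each cone has aperture $30^\circ<180^\circ$), and the apex junction $j_2$ is not obtained from $j_1$ by the $x\mapsto 1-x$ reflection (which exchanges $j_1\leftrightarrow j_3$ and fixes $j_2$), so it needs its own short cone computation --- which does succeed, with cones $[210^\circ,240^\circ]$ and $[300^\circ,330^\circ]$.
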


It seems likely that the techniques used in the proof of Proposition \ref{propositionvonkoch} can be generalized, but it is not clear what the statement of the generalization should be.

{\bf Overview.} Sections \ref{sectionpreliminaries}-\ref{sectiondirichlet} are devoted to the proof of Theorem \ref{theoremdirichlet}, with Section \ref{sectionpreliminaries} containing preliminaries and Section \ref{sectiondirichlet} containing the body of the proof. Section \ref{sectionvonkoch} contains the proof of Proposition \ref{propositionvonkoch}. In Section \ref{sectionotherresults}, we describe some results regarding extrinsic Diophantine approximation which are corollaries of known theorems, and give some open problems.

\ignore{
We mention the Jarn\'ik--Besicovitch theorem only to state that an extrinsic analogue of the Jarn\'ik--Besicovitch theorem is not possible. Indeed, if $\psi$ decays fast enough, then algebraic considerations may prevent the existence of any  $\psi$-extrinsically approximable points. This phenomenon has already been observed in \cite[Lemma 1]{DickinsonDodson}, \cite[Lemma 4.1.1]{Drutu}, and \cite[Lemma 1]{BDL}. We state its most general form here.

\begin{theorem}
\label{theoremjarnikextrinsic}
Let $M\subset\R^d$ be the zero set of the polynomials $P_1,\ldots,P_n:\R^d\to\R$, and assume that $P_1,\ldots,P_n$ have rational coefficients. Let $D$ be the maximum of the degrees of $P_1,\ldots,P_n$, and let $\psi(q) = q^{-D}$. Then
\[
\bigcap_{\epsilon > 0}W_{\epsilon\psi}^\ext = \emptyset.
\]
\end{theorem}
\begin{proof}
Without loss of generality assume that $P_1,\ldots,P_n$ have integral coefficients. By contradiction, suppose there exists $\xx\in \bigcap_{\epsilon > 0}W_{\epsilon\psi}^\ext$. Fix $\epsilon > 0$ to be determined, and suppose that $\dist(\pp/q,\xx)\leq \epsilon\psi(q) = \epsilon q^{-D}$ for some $\pp/q\in\Q^d\butnot M$. Since the derivatives $P_1',\ldots,P_n'$ are bounded in a neighborhood of $\xx$, we have
\[
|P_i(\pp/q)| \lesssim \epsilon q^{-D} \all i = 1,\ldots,n.
\]
On the other hand, direct calculation shows that $P_i(\pp/q)$ is a rational number of denominator no greater than $q^D$. Thus if $\epsilon$ is sufficiently small, then
\[
P_i(\pp/q) = 0 \all i = 1,\ldots,n.
\]
But then $\pp/q\in M$, a contradiction.
\end{proof}
}% end ignore

\ignore{

For the purposes of this paper, the open set condition is not enough to guarantee good behavior; see \sectionsymbol\ref{} for a detailed discussion. Instead we will need the following more stringent condition:

\begin{definition}
An IFS $(u_a)_{a\in E}$ satisfies the \emph{convex open set condition} if it satisfies the open set condition for some open set $W$ which is the finite union of convex open sets.
\end{definition}

An example of an IFS which satisfies the open set condition but not the convex open set condition is given in \sectionsymbol\ref{(see ignored material below)}. Nevertheless, the IFSs generating most well-known fractals such as the Cantor set and the von Koch curve satisfy the convex open set condition.

}% end ignore

\ignore{
\begin{remark}
The following IFS satsifies the open set condition but not the convex open set condition:
\begin{align*}
u_1(z) &= \frac{1}{3}iz\\
u_2(z) &= \frac{1}{3}z + 1\\
u_3(z) &= \frac{1}{3}iz + 1.
\end{align*}
\end{remark}
[Add in proof\internal]
}% end ignore

\section{Sketch of a proof of Corollary \ref{corollarycantor}; Preliminaries}
\label{sectionpreliminaries}

Before presenting the proofs of Theorem \ref{theoremdirichlet}, we sketch a short proof of Corollary \ref{corollarycantor} which uses the theory of continued fractions. This proof contains the basic idea of the more general proof, but it has the advantage of being more intuitive to someone familiar with the theory of continued fractions. After sketching the proof of Corollary \ref{corollarycantor}, we begin the preliminaries for the proof of Theorem \ref{theoremdirichlet}, using the sketch as motivation.

\subsection{Sketch of a proof of Corollary \ref{corollarycantor}}
We recall some elements of the theory of continued fractions (e.g. \cite{Khinchin_book}). As a matter of notation, given $a_1,\ldots \in\N$ we let
\[
[0;a_1,\ldots,a_n] = \cfrac{1}{a_1 + \cfrac{1}{\ddots + \cfrac{1}{a_n}}}
\]
and let $[0;a_1,\ldots] = \lim_{n\to\infty}[0;a_1,\ldots,a_n]$. Any $x\in(0,1)\butnot\Q$ can be represented uniquely as $x = [0;a_1,\ldots]$ for some sequence $(a_n)_1^\infty\in\N^\N$. The rationals $p_n/q_n = [0;a_1,\ldots,a_n]$ are called the \emph{convergents} of $x$. We have the following (see e.g. \cite[(30) and Theorem 19]{Khinchin_book}):
\begin{equation}
\label{khinchinimplications}
\left|x - \frac{p}{q}\right| < \frac{1}{2q^2} \;\;\Rightarrow\;\; \frac{p}{q} \text{ is a convergent of $x$} \;\;\Rightarrow\;\; \left|x - \frac{p}{q}\right| < \frac{1}{q^2}\cdot
\end{equation}
Now let $K$ denote the Cantor set, and suppose $x\in K\butnot\Q$. If all but finitely many convergents of $x$ lie in the Cantor set, then \eqref{khinchinimplications} shows that Corollary \ref{corollarycantor} cannot be true with $C = 1/2$; if infinitely many convergents of $x$ lie outside of the Cantor set, then \eqref{khinchinimplications} shows that the conclusion of Corollary \ref{corollarycantor} holds with $C = 1$. However, the question of whether infinitely many convergents of $x$ must lie outside the Cantor set is a difficult question, which we do not attempt to address here. Instead, we adopt the more modest approach of looking beyond the set of convergents; we are content to raise the value of $C$ if necessary.

\begin{proof}[Sketch of the proof of Corollary \ref{corollarycantor}]
(For a rigorous proof of Corollary \ref{corollarycantor}, see Section \ref{sectiondirichlet}.) Let $K$ denote the Cantor set, and suppose $x = [0;a_1,\ldots]\in K\butnot\Q$. Fix $n\in\N$, and for each $b\in\N$ let
\[
\frac{p_{n,b}}{q_{n,b}} = [0;a_1,\ldots,a_{n - 1},b].
\]
Then $p_{n,a_n}/q_{n,a_n} = p_n/q_n$, but otherwise $p_{n,b}/q_{n,b}$ is not a convergent of $x$. Now fix $N\in\N$, and consider the finite sequence $(p_{n,b}/q_{n,b})_{b = a_n}^{a_n + N}$. It can be shown that
\begin{itemize}
\item[(i)] $q_{n,b} \asymp_N q_n$ for $b = a_n,\ldots,a_n + N$, and that
\item[(ii)] the sequence $(p_{n,b}/q_{n,b})_{b = a_n}^{a_n + N}$ is roughly an arithmetic progression of increment $1/q_n^2$.
\end{itemize}
(The notion of a ``roughly arithmetic progression'' will be made precise in Definition \ref{definitionroughlyarithmetic}.) In particular, for each $b = a_n,\ldots,a_n + N$,
\[
\left|x - \frac{p_{n,b}}{q_{n,b}}\right| \lesssim \frac{|a_n - b| + 1}{q_n^2},
\]
and so
\[
\left|x - \frac{p_{n,b}}{q_{n,b}}\right| \lesssim_N \frac{1}{q_{n,b}^2}\cdot
\]
Let $C_N$ be the implied constant. If we assume for a contradiction that \eqref{extrinsic1dim} holds for only finitely many $p/q$, then there exists $n$ such that for all $b = a_n,\ldots,a_n + N$, we have $p_{n,b}/q_{n,b}\in K$. It follows that $K$ contains arbitrarily large roughly arithmetic progressions.

On the other hand, it is easily seen that $K$ does not contain any arithmetic progression of length five. (If such a progression existed, then the largest such progression would need to contain points from both $[0,1/3]$ and $[2/3,1]$, so its increment would need to be at least $1/3$.) A similar argument shows that there exists $N$ for which $K$ does not contain any roughly arithmetic progressions of length $N$ (cf. Proposition \ref{propositionroughlyarithmetic} below). This is a contradiction.
\end{proof}

\subsection{Good pairs of rational approximations}

In higher dimensions, we cannot use the theory of continued fractions, but we will still produce a sequence of roughly arithmetic progressions which consist of good rational approximations to the desired point. To see how this generalization will work, note that we can write (see e.g. \cite[Theorem 1]{Khinchin_book})
\[
\frac{p_{n,b}}{q_{n,b}} = \frac{p_{n - 2} + bp_{n - 1}}{q_{n - 2} + bq_{n - 1}}\cdot
\]
In particular, $\big((p_{n,b},q_{n,b})\big)_{b\in\N}$ is a true arithmetic progression in $\Z^2$, whose initial value $(p_{n - 2},q_{n - 2})$ and increment $(p_{n - 1},q_{n - 1})$ both represent good rational approximations to $x$. In higher dimensions, we will use the same principle, taking an arithmetic progression in $\Z^{d + 1}$ and projectivizing to get a roughly arithmetic progression in $\Q^d$.

The initial value and increment of our progression must both be good approximations, but they will not be chosen independently; they should be roughly ``on the same order of magnitude''. We make this rigorous in the following lemma:

\begin{lemma}
\label{lemmagoodpair}
Fix $\xx\in\R^d\butnot\Q^d$. Then for every $Q > 0$, there exists a pair $(\rr_0,\rr_\infty)\in(\Z^{d + 1})^2$ such that
\begin{itemize}
\item[(i)] $\rr_0,\rr_\infty$ are linearly independent;
\item[(ii)] If we write $\rr_i = (\pp_i,q_i)$ ($i = 0,\infty$), then $0\leq q_\infty\leq q_0$ and
\begin{equation}
\label{goodpair}
\|q_i\xx - \pp_i\| \leq \frac{1}{q_0^{1/d}};
\end{equation}
\end{itemize}
and such that $q_0\geq Q$.
\end{lemma}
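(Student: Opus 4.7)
The plan is to apply Minkowski's first and second theorems to the symmetric convex body
\[
B_N := \{(\pp,q) \in \R^{d+1} : |q| \le N,\ \|q\xx - \pp\| \le N^{-1/d}\},
\]
whose volume is $2^{d+1}$ by direct integration. A nonzero lattice point of $B_N$ is precisely a pair $(\pp,q) \in \Z^{d+1}$ satisfying $|q| \le N$ and $\|q\xx - \pp\| \le N^{-1/d}$. If for some $N \ge Q$ we can produce two linearly independent lattice vectors $\rr_1,\rr_2 \in B_N$ with $\max(|q_1|,|q_2|) \ge Q$, then labeling the one with larger last coordinate as $\rr_0$ and flipping signs so that $q_0,q_\infty \ge 0$ will yield the pair sought by the lemma: $q_\infty \le q_0$ is automatic, and $\|q_i\xx - \pp_i\| \le N^{-1/d} \le q_0^{-1/d}$ since $q_0 \le N$.

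The core of the argument is showing $\lambda_2(B_N) \le 1$ for arbitrarily large $N$. Suppose, for contradiction, that $\lambda_2(B_N) > 1$ for all $N \ge N_0$; then for each such $N$, every nonzero lattice point of $B_N$ is an integer multiple of a single primitive vector. For a primitive $\rr^\ast = (\pp^\ast, q^\ast) \in \Z^{d+1}$, the ``life interval'' $I(\rr^\ast) := \{N : \rr^\ast \in B_N\}$ is a closed interval in $[0,\infty)$, and is unbounded only when $q^\ast\xx = \pp^\ast$ with $q^\ast \ne 0$, i.e., only when $\xx \in \Q^d$. By Minkowski's first theorem the family $\{I(\rr^\ast)\}_{\rr^\ast}$ covers $[N_0,\infty)$, and by the assumption $\lambda_2 > 1$ these intervals are pairwise disjoint. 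But any pairwise-disjoint collection of bounded closed intervals in $\R$ leaves gaps between consecutive members, so the collection cannot cover $[N_0,\infty)$ unless some $I(\rr^\ast)$ is unbounded; this forces $\xx \in \Q^d$, contradicting the hypothesis.

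With $\lambda_2(B_N) \le 1$ established for arbitrarily large $N$, Minkowski's second theorem produces the desired linearly independent lattice vectors $\rr_1,\rr_2 \in B_N$. A pigeonhole argument then ensures $\max(|q_1|,|q_2|) \ge Q$ once $N$ is large enough: otherwise, along some sequence $N_n \to \infty$, every lattice vector of $B_{N_n}$ would have $|q| < Q$; since the estimate $\|\pp\| \le |q|\|\xx\| + N_n^{-1/d}$ keeps $\pp$ bounded, only finitely many candidate pairs $(\pp,q)$ can occur, and some fixed nonzero candidate would lie in $B_{N_n}$ for infinitely many $n$, forcing $\|q\xx - \pp\| = 0$ and hence either $\xx \in \Q^d$ (if $q \ne 0$) or $\pp = \mathbf 0$ (if $q = 0$), each a contradiction. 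The main obstacle is the interval-partition step in the second paragraph, which converts the hypothesis $\lambda_2(B_N) > 1$ into the impossibility of tiling a ray by disjoint closed bounded ``life intervals''; once it is in place, the remaining reductions are routine Minkowski--pigeonhole bookkeeping.
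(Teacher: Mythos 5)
Your approach is the same as the paper's at heart: both apply Minkowski's theory of successive minima to the one-parameter family of boxes $B_N$ (the paper realizes the same family as $\|g_t\circ T_\xx(\cdot)\|\le 1$ with $e^t=N$), and both establish $\lambda_2(B_N)\le 1$ for arbitrarily large $N$ by a covering argument indexed by primitive vectors, concluding that otherwise a single primitive vector would persist for all $N$ and force $\xx\in\Q^d$. The one step in your write-up that is not yet a proof is exactly the one you flag: ``any pairwise-disjoint collection of bounded closed intervals leaves gaps between consecutive members.'' Disjoint closed intervals can a priori accumulate, so an interval need not have a ``next'' member; the assertion that such a family cannot tile $[N_0,\infty)$ is true (it is a Sierpi\'nski-type decomposition statement) but requires an argument. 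The paper sidesteps the issue entirely by defining its sets $U_\rr$ with the \emph{strict} inequality $\|g_t\circ T_\xx(\rr)\|<1$: these sets are open, they form a disjoint open cover of the connected ray $(t_0,\infty)$, and connectedness immediately forces a single $U_\rr$ to be the whole ray. If you prefer to keep the closed life intervals, the fix is to observe that they are locally finite: for any $R<\infty$, only finitely many primitive $\rr^\ast=(\pp^\ast,q^\ast)$ satisfy $I(\rr^\ast)\cap[N_0,R]\neq\emptyset$ (one needs $|q^\ast|\le R$ and $\|q^\ast\xx-\pp^\ast\|\le N_0^{-1/d}$, hence $\|\pp^\ast\|$ bounded), so the interval containing $N_0$ has a right endpoint $b$, and since only finitely many disjoint closed intervals meet $[N_0,b+1]$, some right-neighbourhood $(b,b+\epsilon)$ is left uncovered. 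The remaining steps --- the sign/swap normalisation giving $0\le q_\infty\le q_0$ and the argument ensuring $q_0\ge Q$ --- agree with the paper's.
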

We will call a pair $(\rr_0,\rr_\infty)$ satisfying (i) and (ii) a \emph{good pair for $\xx$}.
\begin{proof}[Proof of Lemma \ref{lemmagoodpair}]
Interpret $\xx$ as a column vector and let
\begin{align*}
T_\xx &:= \left[\begin{array}{cc}
I_d & -\xx\\
0 & 1
\end{array}\right]\\
%\Lambda &:= T_\xx(\Z^{d + 1})\\
g_t &:= \left[\begin{array}{cc}
e^{t/d}I_d & 0\\
0 & e^{-t}
\end{array}\right].
\end{align*}
Here $I_d$ denotes the $d\times d$ identity matrix.
\begin{claim}
There exists a sequence $t_k\tendsto k \infty$ such that for each $k\in\N$,
\[
\lambda_2(g_{t_k}\circ T_\xx(\Z^{d + 1})) \leq 1,
\]
where $\lambda_2$ is the second successive Minkowski minimum\footnote{See e.g. \cite[\6IV.1]{Schmidt3} for an exposition of Minkowski's theory of successive minima.} (with respect to the max norm).
\end{claim}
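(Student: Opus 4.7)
The plan is a standard argument on the geometry of numbers. Write $\Lambda_t := g_t T_\xx \Z^{d+1}$; since $\det g_t = \det T_\xx = 1$, each $\Lambda_t$ is unimodular, so Minkowski's first theorem applied to the unit cube $[-1,1]^{d+1}$ (volume $2^{d+1}$) gives $\lambda_1(\Lambda_t) \leq 1$ for every $t \geq 0$.

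First I would record the single explicit computation needed: for any nonzero $(\pp, q) \in \Z^{d+1}$,
\[
\|g_t T_\xx (\pp,q)^T\|_\infty = \max\bigl( e^{t/d}\|\pp - q\xx\|_\infty,\; e^{-t}|q| \bigr),
\]
and this tends to $\infty$ as $t \to \infty$. Indeed, if $q=0$ then $\pp \neq 0$ forces the first term to diverge; if $q \neq 0$ then $\pp - q\xx \neq 0$ (this is the only place the hypothesis $\xx \notin \Q^d$ is used), and the first term again diverges.

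The core argument is the following one-shot construction, applied with $t_0$ arbitrarily large. Choose a primitive $(\pp_0, q_0) \in \Z^{d+1}$ realizing $\lambda_1(\Lambda_{t_0}) \leq 1$, and let
\[
t^* := \sup\bigl\{ t \geq t_0 : \|g_t T_\xx (\pp_0, q_0)^T\|_\infty \leq 1 \bigr\},
\]
which is finite by the previous paragraph, and for which continuity gives $\|g_{t^*} T_\xx (\pp_0, q_0)^T\|_\infty = 1$ while $\|g_t T_\xx (\pp_0,q_0)^T\|_\infty > 1$ for every $t > t^*$. For each such $t$, Minkowski produces a primitive $(\pp_t, q_t) \in \Z^{d+1}$ with $\|g_t T_\xx (\pp_t,q_t)^T\|_\infty \leq 1$, and necessarily $(\pp_t, q_t) \neq \pm (\pp_0, q_0)$. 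Since $T_\xx \Z^{d+1}$ is discrete and the preimages $g_t^{-1}[-1,1]^{d+1}$ are uniformly bounded as $t \searrow t^*$, only finitely many $(\pp_t, q_t)$ arise; pigeonhole extracts a single $(\pp_\infty, q_\infty) \neq \pm(\pp_0, q_0)$ that works along a sequence $t_n \searrow t^*$, and continuity then yields $\|g_{t^*} T_\xx(\pp_\infty, q_\infty)^T\|_\infty \leq 1$. Two primitive lattice vectors not equal up to sign are $\R$-linearly independent (since $\R(\pp_0,q_0) \cap \Z^{d+1} = \Z(\pp_0,q_0)$ by primitivity), so $\lambda_2(\Lambda_{t^*}) \leq 1$. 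Letting $t_0 \to \infty$ drives $t^* \to \infty$, producing the desired sequence.

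The main delicate step is the discrete-extraction argument that produces a single $(\pp_\infty, q_\infty)$ working for $t$ approaching $t^*$; everything else is Minkowski's theorem plus the explicit formula for the $g_t T_\xx$-length of a lattice vector.
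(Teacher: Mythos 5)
Your proof is correct, and it takes a genuinely different route from the paper's. The paper argues by contradiction: assuming $\lambda_2(g_t\circ T_\xx(\Z^{d+1})) > 1$ for all large $t$, it defines for each primitive $\rr$ the set $U_\rr = \{t : \|g_t T_\xx \rr\| < 1\}$, uses Minkowski's \emph{second} theorem to show $\lambda_1 < 1$ strictly (so the $U_\rr$ cover a ray $U = (t_0,\infty)$), observes that the $U_\rr$ are pairwise disjoint open sets (else $\lambda_2 < 1$), and invokes connectedness of $U$ to conclude $U = U_\rr$ for a single $\rr$ --- which is impossible since $\|g_t T_\xx \rr\| \to \infty$. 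Your proof is direct and constructive: you only need Minkowski's \emph{first} theorem, you locate the ``critical time'' $t^*$ at which the short vector $\rr_0$ ceases to stay in the unit cube, and you use discreteness of $T_\xx\Z^{d+1}$ together with the uniform boundedness of $g_t^{-1}[-1,1]^{d+1}$ near $t^*$ to extract a second short vector at $t^*$. The paper's version is slicker and shorter once one has the idea of the disjoint open cover, while yours is more explicit about which time to take and avoids the appeal to connectedness and to the second Minkowski theorem; both hinge on the same two inputs (divergence of $\|g_t T_\xx\rr\|$ for fixed $\rr$, which is exactly where irrationality of $\xx$ enters, and Minkowski's pigeonhole guaranteeing a short vector at every $t$).

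One small remark: rather than checking directly that the set $\{t \geq t_0 : \|g_tT_\xx\rr_0\| \leq 1\}$ is bounded, it is slightly cleaner to note from your explicit formula that since $q_0 \neq 0$ and $\xx \notin \Q^d$ the first coordinate $e^{t/d}\|\pp_0 - q_0\xx\|$ is strictly increasing and unbounded, so the set is an interval with finite right endpoint; this also shows $t^* \geq t_0$ automatically, which is what you need to drive $t^* \to \infty$.
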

\begin{subproof}
By contradiction, suppose that there exists $t_0 > 0$ such that for all $t > t_0$ we have $\lambda_2(g_t\circ T_\xx(\Z^{d + 1})) > 1$, and let $U = (t_0,\infty)$.

Let $\Z^{d + 1}_\prim$ denote the set of primitive vectors of $\Z^{d + 1}$. For each $\rr\in\Z^{d + 1}_\prim$ let
\[
U_\rr = \{t\in U:\|g_t\circ T_\xx(\rr)\| < 1\}.
\]
We claim that the collection of sets $(U_\rr)_{\rr\in\Z^{d + 1}_\prim}$ is a disjoint open cover of $U$. Indeed, if $t\in U_{\rr_1}\cap U_{\rr_2}$ for some $\rr_1,\rr_2\in\Z^{d + 1}_\prim$ for which $U_{\rr_1}\neq U_{\rr_2}$, then we would have $\lambda_2(g_t\circ T_\xx(\Z^{d + 1})) < 1$, contradicting our hypothesis. On the other hand, for any $t\in U$, we have by Minkowski's second theorem
\[
\lambda_1(g_t\circ T_\xx(\Z^{d + 1})) \leq \frac{1}{\prod_{i = 2}^{d + 1} \lambda_i(g_t\circ T_\xx(\Z^{d + 1}))} \leq \frac{1}{\lambda_2(g_t\circ T_\xx(\Z^{d + 1}))^d} < 1,
\]
and so there exists $\rr\in\Z^{d + 1}_\prim$ such that $\|g_t\circ T_\xx(\rr)\| < 1$, i.e. $t\in U_\rr$.

Since $U$ is a connected set, it follows that $U = U_\rr$ for some $\rr\in\Z^{d + 1}_\prim$. Then $\|g_t\circ T_\xx(\rr)\|$ is bounded as $t$ tends to infinity. Thus $T_\xx(\rr)\in\R\ee_{d + 1}$, i.e. $\xx = \pp/q$ where $\rr = (\pp,q)$, contradicting that $\xx\notin\Q^d$.
\end{subproof}
Now fix $k\in\N$ and let $t = t_k$. Since $\lambda_2(g_t\circ T_\xx(\Z^{d + 1})) \leq 1$, there exist $\rr_0,\rr_\infty\in\Z^{d + 1}$ linearly independent such that
\[
\|g_t\circ T_\xx(\rr_i)\| \leq 1, \;\; i = 0,\infty.
\]
Writing this out in terms of coordinates yields
\[
\|e^{t/d}(\pp_i - q_i\xx)\| \leq 1 \text{ and } |e^{-t}q_i| \leq 1,
\]
or equivalently
\begin{equation}
\label{qixpi}
\|q_i\xx - \pp_i\| \leq e^{-t/d} \text{ and } |q_i| \leq e^t.
\end{equation}
By replacing $\rr_i$ by $-\rr_i$ if necessary, we may assume $q_i\geq 0$ for $i = 0,\infty$; by swapping $\rr_0$ and $\rr_\infty$ if necessary we may assume $q_\infty\leq q_0$. After these reductions, \eqref{qixpi} implies \eqref{goodpair}, which demonstrates that the pair $(\rr_0,\rr_\infty)$ is good for $\xx$. Finally, we observe that as $k\to\infty$ we have $q_0\to\infty$, because otherwise \eqref{qixpi} would imply that $\xx\in\Q^d$. Thus for any given $Q > 0$, we can construct a good pair for which $q_0\geq Q$.
\end{proof}

Suppose that $(\rr_0,\rr_\infty)$ is a good pair for $\xx$. We now create an arithmetic progression in $\Z^{d + 1}$ using $\rr_0$ as the initial value and $\rr_\infty$ as the increment. For each $i\in\Z$ let
\begin{equation}
\label{ridef}
\rr_i = \rr_0 + i\rr_\infty
\end{equation}
and write
\[
\rr_i = (\pp_i,q_i).
\]
Then $\{\rr_i : i\in\Z\}$ is an arithmetic progression in $\Z^{d + 1}$.

Our key claim is that \emph{every} rational $\pp_i/q_i$ represents a good approximation to $\xx$ in the sense of \eqref{extrinsic}, with $C$ depending only on $i$ and not on $\xx$.

\begin{claim}
\label{claimextrinsic2}
For $i\in\N$,
\begin{equation}
\label{extrinsic2}
\left\|\xx - \frac{\pp_i}{q_i}\right\| \leq \left(\frac{1 + i}{q_i}\right)^{1 + 1/d}\cdot
\end{equation}
\end{claim}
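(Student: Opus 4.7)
The plan is to exploit the arithmetic progression structure $\rr_i = \rr_0 + i\rr_\infty$ componentwise and reduce the problem to a single application of the triangle inequality, together with the defining bound \eqref{goodpair} for the good pair.

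First I would write out $q_i\xx - \pp_i$. Since $\pp_i = \pp_0 + i\pp_\infty$ and $q_i = q_0 + iq_\infty$, linearity gives
\[
q_i\xx - \pp_i = (q_0\xx - \pp_0) + i(q_\infty\xx - \pp_\infty),
\]
so by the triangle inequality and \eqref{goodpair},
\[
\|q_i\xx - \pp_i\| \leq \|q_0\xx - \pp_0\| + i\|q_\infty\xx - \pp_\infty\| \leq \frac{1+i}{q_0^{1/d}}.
\]
Dividing by $q_i$ yields $\|\xx - \pp_i/q_i\| \leq (1+i)/(q_i q_0^{1/d})$.

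Next I would convert the factor $q_0^{1/d}$ in the denominator into one involving $q_i$. The inequality $q_\infty \leq q_0$ from clause (ii) of the definition of a good pair gives $q_i = q_0 + iq_\infty \leq (1+i)q_0$, i.e. $q_0 \geq q_i/(1+i)$, and hence $q_0^{1/d} \geq (q_i/(1+i))^{1/d}$. Substituting this into the previous bound yields
\[
\left\|\xx - \frac{\pp_i}{q_i}\right\| \leq \frac{1+i}{q_i}\cdot\frac{(1+i)^{1/d}}{q_i^{1/d}} = \left(\frac{1+i}{q_i}\right)^{1 + 1/d},
\]
which is exactly \eqref{extrinsic2}.

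There is no real obstacle here: the argument is a short two-step computation, and the only nontrivial input is the pair of inequalities packaged into the definition of a good pair (the uniform approximation bound $1/q_0^{1/d}$ for both $i=0$ and $i=\infty$, and the size comparison $q_\infty \leq q_0$). The whole point of Lemma \ref{lemmagoodpair} was to arrange precisely these two ingredients so that this routine estimate goes through.
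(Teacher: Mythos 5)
Your proof is correct and follows the same route as the paper: decompose $q_i\xx - \pp_i$ via the arithmetic-progression identity, apply the triangle inequality with the good-pair bound \eqref{goodpair}, and combine with $q_i \leq (1+i)q_0$; you have merely written out the final "rearranging" step in full detail.
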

\begin{proof}
\begin{align*}
\|q_i\xx - \pp_i\|
&= \|(q_0\xx - \pp_0) + i(q_\infty\xx - \pp_\infty)\| \\
&\leq \frac{1 + i}{q_0^{1/d}} \by{\eqref{goodpair}}\\
q_i &= q_0 + iq_\infty \leq (1 + i)q_0
\end{align*}
and rearranging gives the desired result.
\end{proof}
We remark that for $N\in\N$ fixed, if we let $C_N = (1 + N)^{1 + 1/d}$, then \eqref{extrinsic2} implies \eqref{extrinsic} for $i = 0,\ldots,N$.

\subsection{Roughly arithmetic progressions}
We would now like to make rigorous in what sense the sequence $(\pp_i/q_i)_0^\infty$ described above is a roughly arithmetic progression. We begin with the following observation:

\begin{observation}
\label{observationline}
For each $i\in\Z$, $\pp_i/q_i$ is on the line spanned by $\pp_0/q_0$ and $\pp_\infty/q_\infty$.
\end{observation}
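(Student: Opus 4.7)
The plan is to prove this by direct computation. By definition, $\rr_i = \rr_0 + i\rr_\infty$, so writing $\rr_i = (\pp_i, q_i)$ componentwise yields $\pp_i = \pp_0 + i\pp_\infty$ and $q_i = q_0 + iq_\infty$. The strategy is to exhibit $\pp_i/q_i$ as an explicit affine combination of $\pp_0/q_0$ and $\pp_\infty/q_\infty$.

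Concretely, I would set $\lambda = q_0/(q_0 + iq_\infty)$, so $1 - \lambda = iq_\infty/(q_0 + iq_\infty)$ (noting that $q_0 + iq_\infty = q_i > 0$ for $i\geq 0$, since $q_0, q_\infty \geq 0$ and $q_0 > 0$ by the good-pair property). Then
\[
\lambda\cdot\frac{\pp_0}{q_0} + (1 - \lambda)\cdot\frac{\pp_\infty}{q_\infty}
= \frac{\pp_0 + i\pp_\infty}{q_0 + iq_\infty}
= \frac{\pp_i}{q_i},
\]
which places $\pp_i/q_i$ on the affine line through $\pp_0/q_0$ and $\pp_\infty/q_\infty$. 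Conceptually, this just reflects the fact that the $2$-dimensional subspace of $\R^{d+1}$ spanned by the linearly independent vectors $\rr_0,\rr_\infty$ projectivizes to a line in the affine chart $\{q = 1\}\cong\R^d$. There is essentially no obstacle; the one mild subtlety is handling the possibility that $q_\infty = 0$ or that $\pp_\infty/q_\infty$ is undefined, but in those edge cases the line still makes sense via the projective picture (the progression lies on the line through $\pp_0/q_0$ with direction $\pp_\infty$), and for the main application only $i\geq 0$ with $q_i > 0$ is needed.
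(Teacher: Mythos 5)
Your computation is correct and this is essentially the paper's argument made explicit: the paper simply observes that the line through $\pp_0/q_0$ and $\pp_\infty/q_\infty$ is the projectivization of the plane $\operatorname{span}(\rr_0,\rr_\infty)$, which contains $\rr_i$, and your affine-combination identity is just a hands-on verification of the same projective fact. You correctly flag the only wrinkle (the case $q_\infty=0$), which the paper's projective phrasing also handles implicitly.
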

\begin{proof}
The line spanned by $\pp_0/q_0$ and $\pp_\infty/q_\infty$ is the projectivization of the two-dimensional subspace of $\R^{d + 1}$ spanned by $\rr_0$ and $\rr_\infty$.
\end{proof}

Thus, it will not be too restrictive for us to require roughly arithmetic progressions to be subsets of lines.

Let $L$ be a line in $\R^d$, and let $L_0$ be its linear part, i.e. $L_0 = L - \xx$ where $\xx\in L$ is any point. One way of defining arithmetic progressions on $L$ is to say that a sequence $(\xx_i)_0^N$ is an arithmetic progression if there exists a vector $\vv\in L_0$ (the \emph{increment}) such that for all $0\leq i < j \leq N$,
\begin{equation}
\label{lji}
\xx_j - \xx_i = (j - i)\vv.
\end{equation}
To define a roughly arithmetic progression, we will relax the condition \eqref{lji}. Specifically, we have the following:
\begin{definition}
\label{definitionroughlyarithmetic}
Fix $C > 0$. A sequence $(\xx_i)_0^N$ in $L$ is a \emph{$C$-roughly arithmetic progression} if there exists $\vv\in L_0\butnot\{\0\}$ such that for all $0\leq i < j \leq N$,
\begin{equation}
\label{ljiapprox}
\frac{1}{C}(j - i) \leq \frac{\xx_j - \xx_i}{\vv} \leq C(j - i).
\end{equation}
Here the expression $\frac{\xx_j - \xx_i}{\vv}$ denotes the unique value $c\in\R$ for which $\xx_j - \xx_i = c\vv$.
\end{definition}

In fact, the sequence $(\pp_i/q_i)_0^\infty$ is not a roughly arithmetic progression in this sense, since $\|\pp_j/q_j - \pp_i/q_i\| \to 0$ as $i,j\to\infty$. However, we will now prove that sufficiently long subsequences of the sequence $(\pp_i/q_i)_0^\infty$ are roughly arithmetic.

\begin{proposition}
\label{propositionroughlyarithmetic}
Fix $\pp_0/q_0,\pp_\infty/q_\infty\in\Q^d$, and for each $i\in\N$ let $\pp_i = \pp_0 + i\pp_\infty$ and $q_i = q_0 + i q_\infty$. Then for each $N\in\N$, $(\pp_i/q_i)_N^{2N}$ is a $2$-roughly arithmetic progression.
\end{proposition}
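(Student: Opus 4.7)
The plan is to compute the differences $\pp_j/q_j - \pp_i/q_i$ explicitly, then observe that restricting the index range to $[N,2N]$ controls the denominators $q_i$ tightly enough to fit into Definition \ref{definitionroughlyarithmetic} with constant exactly $2$.

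\textbf{Step 1 (algebraic identity).} A direct expansion of $\pp_j q_i - \pp_i q_j$ using $\pp_i = \pp_0 + i\pp_\infty$ and $q_i = q_0 + iq_\infty$ causes all the quadratic terms in $i,j$ and all the constant terms to cancel, leaving
\[
\pp_j q_i - \pp_i q_j = (j-i)(\pp_\infty q_0 - \pp_0 q_\infty).
\]
Writing $\ww := \pp_\infty q_0 - \pp_0 q_\infty$, this yields
\[
\frac{\pp_j}{q_j} - \frac{\pp_i}{q_i} = \frac{(j-i)\,\ww}{q_i q_j},
\]
so all the differences are scalar multiples of the single vector $\ww$, consistent with Observation \ref{observationline}. (As in the Lemma \ref{lemmagoodpair} setup, we assume $\rr_0,\rr_\infty$ linearly independent, so that $\ww \ne \0$.)

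\textbf{Step 2 (denominator control).} Since $0 \le q_\infty \le q_0$, the sequence $(q_i)$ is non-decreasing, and for $N \le i \le 2N$,
\[
q_N \;\le\; q_i \;\le\; q_{2N} \;=\; q_0 + 2Nq_\infty \;\le\; 2q_0 + 2Nq_\infty \;=\; 2q_N,
\]
so $q_N^2 \le q_i q_j \le 4q_N^2$ for all $N \le i < j \le 2N$.

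\textbf{Step 3 (choice of increment).} Take $\vv := \ww/(2q_N^2)$. Combining Steps 1 and 2,
\[
\frac{\xx_j - \xx_i}{\vv} \;=\; \frac{2(j-i)\,q_N^2}{q_i q_j},
\]
and the two-sided bound on $q_i q_j$ gives
\[
\tfrac12(j-i) \;\le\; \frac{\xx_j - \xx_i}{\vv} \;\le\; 2(j-i),
\]
which is exactly \eqref{ljiapprox} with $C = 2$.

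There is essentially no real obstacle here: the whole point of restricting to $[N,2N]$ rather than to $[0,N]$ is that the multiplicative ratio $q_{2N}/q_N$ is at most $2$, rather than potentially unbounded, which is precisely what lets us take the fixed constant $C = 2$. The only subtlety worth remarking on is that the proposition requires $\vv \ne \0$, which is why one must record that the underlying pair $(\rr_0,\rr_\infty)$ produced by Lemma \ref{lemmagoodpair} is linearly independent.
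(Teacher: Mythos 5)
Your proof is correct and essentially the paper's own: the same cancellation gives $\pp_j q_i - \pp_i q_j = (j-i)(\pp_\infty q_0 - \pp_0 q_\infty)$, and the same bound $q_{2N} \leq 2q_N$ finishes it, with only a cosmetic difference in normalization (you take $\vv = \ww/(2q_N^2)$, the paper takes $\vv = \ww/(q_N q_{2N})$; both land in $[q_N^2, 2q_N^2]$ and give $C = 2$). Your aside about $\vv \neq \0$ is well observed: the proposition as stated tacitly needs $\pp_\infty q_0 - q_\infty\pp_0 \neq \0$, i.e.\ linear independence of $\rr_0$ and $\rr_\infty$, and the paper's own proof silently assumes this too; it is guaranteed in the only application, via Lemma \ref{lemmagoodpair}. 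One small superfluity: the hypothesis $q_\infty \leq q_0$ that you invoke does no work here, since monotonicity of $(q_i)$ needs only $q_\infty \geq 0$ and the bound $q_{2N}\leq 2q_N$ needs only $q_0 \geq 0$.
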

\begin{proof}
We observed above (Observation \ref{observationline}) that the sequence $(\pp_i/q_i)_N^{2N}$ is collinear. Fix $N\leq i < j \leq 2N$. Then
\begin{align*}
\frac{\pp_j}{q_j} - \frac{\pp_i}{q_i}
&= \frac{1}{q_i q_j} \big[q_i \pp_j - q_j \pp_i\big]\\
&= \frac{1}{q_i q_j} \big[(q_0 + i q_\infty)(\pp_0 + j \pp_\infty) - (q_0 + j q_\infty)(\pp_0 + i \pp_\infty)\big]\\
&= \frac{1}{q_i q_j} (j - i) \big[q_0 \pp_\infty - q_\infty \pp_0 \big].
\end{align*}
So let
\[
\vv = \frac{q_0 \pp_\infty - q_\infty \pp_0}{q_N q_{2N}},
\]
so that
\[
\frac{\frac{\pp_j}{q_j} - \frac{\pp_i}{q_i}}{\vv} = \frac{q_N q_{2N}}{q_i q_j}\cdot
\]
The proposition follows on noting that
\[
q_{2N} = q_0 + 2Nq_\infty \leq 2q_0 + 2Nq_\infty = 2q_N.
\]
\end{proof}

Proposition \ref{propositionroughlyarithmetic} allows us to prove a preliminary version of Theorem \ref{theoremdirichlet}:

\begin{corollary}
\label{corollarydirichlet}
Fix $d\in\N$ and a set $S\subset\R^d$. Suppose that for some $N$, $S$ contains no $2$-roughly arithmetic progression of length $N$. Then there exists $C > 0$ such that for all $\xx\in S\butnot\Q^d$, there exist infinitely many $\pp/q\in\Q^d\butnot S$ satisfying
\begin{repequation}{extrinsic}
\left\|\xx - \frac{\pp}{q}\right\| \leq \frac{C}{q^{1 + 1/d}}\cdot
\end{repequation}
\end{corollary}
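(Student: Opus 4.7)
The plan is to combine the three ingredients developed in this section. Lemma \ref{lemmagoodpair} furnishes good pairs $(\rr_0,\rr_\infty)$ for $\xx$ with $q_0$ arbitrarily large; Claim \ref{claimextrinsic2} shows that every term of the arithmetic progression $\rr_i = \rr_0 + i\rr_\infty$ projectivizes to a rational approximation of $\xx$ of controlled quality; and Proposition \ref{propositionroughlyarithmetic} asserts that the block $(\pp_i/q_i)_{i=N}^{2N}$ is a $2$-roughly arithmetic progression of length $N+1$. The hypothesis that $S$ contains no $2$-roughly arithmetic progression of length $N$ then forces at least one of these $N+1$ candidate approximants to lie outside $S$, and Claim \ref{claimextrinsic2} provides the desired Diophantine bound for that approximant.

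Concretely, I would fix $C := (1 + 2N)^{1 + 1/d}$, take any $\xx \in S \butnot \Q^d$, and for each $Q > 0$ apply Lemma \ref{lemmagoodpair} to produce a good pair with $q_0 \geq Q$. Forming $(\rr_i)$ via \eqref{ridef} and invoking Proposition \ref{propositionroughlyarithmetic} with the given $N$, I would observe that a length-$(N+1)$ $2$-roughly arithmetic progression automatically contains length-$N$ ones by restricting the index range in \eqref{ljiapprox}, so the hypothesis on $S$ rules out the possibility that all of $\pp_N/q_N,\ldots,\pp_{2N}/q_{2N}$ lie in $S$. Picking $i^* \in \{N,\ldots,2N\}$ with $\pp_{i^*}/q_{i^*} \notin S$ and applying Claim \ref{claimextrinsic2} yields
\[
\left\|\xx - \frac{\pp_{i^*}}{q_{i^*}}\right\| \leq \left(\frac{1 + i^*}{q_{i^*}}\right)^{1 + 1/d} \leq \frac{C}{q_{i^*}^{1 + 1/d}},
\]
and passing to the lowest-terms representative only decreases the denominator, preserving both the inequality and membership in $\Q^d \butnot S$.

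To extract infinitely many such approximants I would let $Q \to \infty$ along an unbounded sequence; since $q_{i^*} \geq q_0 \geq Q$, the approximants converge to $\xx$, and irrationality of $\xx$ prevents them from being eventually constant, so the resulting collection of extrinsic approximations is infinite. I do not foresee any substantial obstacle: the argument is essentially a bookkeeping exercise that glues the three preceding results together. The only points demanding a moment's care are the passage from a length-$(N+1)$ $2$-roughly arithmetic progression to one of length $N$ (immediate from Definition \ref{definitionroughlyarithmetic}) and the cosmetic step of reducing the fraction $\pp_{i^*}/q_{i^*}$ to lowest terms before reading off the final bound.
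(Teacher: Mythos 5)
Your proposal is correct and follows the same route as the paper: it glues together Lemma \ref{lemmagoodpair}, Claim \ref{claimextrinsic2}, and Proposition \ref{propositionroughlyarithmetic} to locate an extrinsic approximant in the block $(\pp_i/q_i)_{i=N}^{2N}$, and lets $q_0 \to \infty$ to obtain infinitely many such approximants. Your choice $C = (1 + 2N)^{1+1/d}$ is in fact the correct constant for indices up to $2N$ (the paper writes $(1+N)^{1+1/d}$, which is a harmless slip since only $\xx$-independence matters), and your remarks about restricting a length-$(N+1)$ progression to length $N$ and about reduced fractions tidy up small points the paper leaves implicit.
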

In other words, if $S$ contains no $2$-roughly arithmetic progression of length $N$, then $S$ satisfies the conclusion of Theorem \ref{theoremdirichlet} with $C$ independent of $\xx$.
\begin{proof}
Let $C = C_N = (1 + N)^{1 + 1/d}$, and fix $\xx\in S\butnot\Q^d$.

Fix a pair $(\rr_0,\rr_\infty)$ which is good for $\xx$, and for each $i\in\N$, let $\rr_i = (\pp_i,q_i)$ be defined by \eqref{ridef}. By Proposition \ref{observationline}, the sequence $(\pp_i/q_i)_N^{2N}$ is a $2$-roughly arithmetic progression, so by hypothesis, this sequence contains a point which is not in $S$, say $\pp_i/q_i\notin S$. By Claim \ref{claimextrinsic2}, \eqref{extrinsic} is satisfied for $\pp_i/q_i$. To summarize, for each good pair $(\rr_0,\rr_\infty)$, there is a rational $\pp_i/q_i\in \Q^d\butnot S$ satisfying \eqref{extrinsic} with $q_i\geq q_0$.

By Lemma \ref{lemmagoodpair}, for each $Q > 0$ there is a good pair satisfying $q_0\geq Q$. Thus by the above argument, there exists $\pp/q\in\Q^d\butnot S$ satisfying \eqref{extrinsic} such that $q\geq Q$. Thus there are infinitely many rationals $\pp/q\in \Q^d\butnot S$ satisfying \eqref{extrinsic}.
\end{proof}

Corollary \ref{corollarydirichlet} says that in order to prove the extrinsic analogue for points in a fractal or manifold, it is enough to demonstrate a uniform bound on the length of a $2$-roughly arithmetic progression contained in that set. Intuitively, the reason for this should be that $S$ contains no line segment by assumption. (If $S$ did contain a line segment, then it would automatically contain arbitrarily long arithmetic progressions, which would in particular be $C$-roughly arithmetic for every $C\geq 1$.) So it will be useful to know that in certain cases, the limit of roughly arithmetic progressions is a line segment. To make this rigorous, we recall the definition of the \emph{Hausdorff metric} on the space of compact subsets of a metric space $X$.

\begin{definition}
Let $\KK^*(X)$ denote the set of nonempty compact subsets of $X$. The \emph{Hausdorff distance} between two sets $K_1,K_2\in\KK^*(X)$ is the number
\[
\dist_H(K_1,K_2) := \max\left\{\max_{x\in K_1}\dist(x,K_2) , \max_{x\in K_2}\dist(x,K_1)\right\}.
\]
\end{definition}
For background on the Hausdorff metric, see \cite[\64.F]{Kechris}. The topology induced on $\KK^*(X)$ by the Hausdorff metric is called the \emph{Vietoris topology} (cf. \cite[Exercise 4.21]{Kechris}).

We may now prove the following lemma:

\begin{lemma}
\label{lemmahausdorfflimit}
Fix $C > 0$. For each $N$, suppose that $K_N\subset\R$ is a $C$-roughly arithmetic progression of length $(N + 1)$, whose left and right endpoints are equal to $0$ and $1$, respectively. Then
\[
K_N \tendsto N [0,1]
\]
in the Vietoris topology.
\end{lemma}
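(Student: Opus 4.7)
The plan is to show directly that the Hausdorff distance from $K_N$ to $[0,1]$ tends to zero, by bounding both $K_N\setminus[0,1]$ (which turns out to be empty) and the maximum gap between consecutive points of $K_N$.

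First I would write $K_N=\{x_0,\ldots,x_N\}$ with $x_0=0$, $x_N=1$, and associated increment $v\in\mathbf R\setminus\{0\}$ as in Definition \ref{definitionroughlyarithmetic}. Applying \eqref{ljiapprox} with $i=0,j=N$ gives $(1-0)/v=N/v \geq N/C > 0$, so $v>0$. Applying \eqref{ljiapprox} with consecutive indices shows $x_{i+1}-x_i=c_i v$ with $1/C\leq c_i\leq C$, so the sequence is strictly increasing; combined with $x_0=0$ and $x_N=1$ this yields $K_N\subset[0,1]$. Hence the first of the two terms in the Hausdorff distance vanishes.

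Next I would estimate $v$ in terms of $N$. Summing the telescoping identity $\sum_{i=0}^{N-1}(x_{i+1}-x_i)=1$ and dividing by $v$ gives $\sum_{i=0}^{N-1}c_i=1/v$; since each $c_i\in[1/C,C]$ this yields $N/C\leq 1/v\leq NC$, hence $v\leq C/N$. Consequently the maximal consecutive gap satisfies
\[
\max_{0\leq i<N}(x_{i+1}-x_i)\leq C\,v\leq \frac{C^2}{N}.
\]
Because $K_N\subset[0,1]$ contains both endpoints $0,1$ and its consecutive gaps are at most $C^2/N$, every point of $[0,1]$ lies within $C^2/N$ of some element of $K_N$, giving $\max_{x\in[0,1]}\dist(x,K_N)\leq C^2/N$.

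Combining the two bounds yields $\dist_H(K_N,[0,1])\leq C^2/N\to 0$, which is the desired convergence in the Vietoris topology. There is no real obstacle here; the only thing to be careful about is extracting the sign and size of the increment $v$ from Definition \ref{definitionroughlyarithmetic} before using it to control the gaps.
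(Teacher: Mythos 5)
Your proof is correct and follows essentially the same route as the paper's: plug $i=0,j=N$ into \eqref{ljiapprox} to get $0<v\leq C/N$, plug in consecutive indices to bound the gaps by $C^2/N$, and conclude $\dist_H(K_N,[0,1])\to 0$. (Minor slip: you wrote $(1-0)/v = N/v$, where you meant $(x_N-x_0)/v = 1/v$ with the lower bound $N/C$ coming from \eqref{ljiapprox}; the conclusion $v>0$ and $v\leq C/N$ is unaffected.)
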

\begin{proof}
Write $K_N = (x_{N,i})_{i = 0}^N$. Then \eqref{ljiapprox} reads:
\[
\frac{j - i}{C} \leq \frac{x_{N,j} - x_{N,i}}{v_N} \leq C(j - i).
\]
Plugging in $i = 0$, $j = N$ shows that $0 < v_N \leq C/N$. Then, plugging in $j = i + 1$ shows that $x_{N,i + 1} - x_{N,i} \leq C^2/N$ for all $i = 0,\ldots,N - 1$. It follows that $[0,1]\butnot K_N$ cannot contain any interval of length greater than $C^2/N$. In particular,
\[
\dist(x,K_N) \leq C^2/(2N) \all x\in [0,1].
\]
Since $K_N\subset[0,1]$, this implies that $\dist_H(K_N,[0,1])\leq C^2/(2N)$. Since $C^2/(2N) \to 0$ as $N\to\infty$, this completes the proof.
\end{proof}

\subsection{Iterated function systems}
We now recall the notion of an iterated function system (IFS); for a detailed exposition see \cite[\69]{Falconer_book}. We will only consider the case of a finite IFS generated by similarities.

\begin{definition}
\label{definitionIFS}
Fix $d\in\N$, and let $E$ be a finite set. An \emph{iterated function system} (\emph{IFS}) on $\R^d$ is a collection $(u_a)_{a\in E}$ of contracting similarities $u_a:\R^d\to\R^d$ satisfying the \emph{open set condition}: there exists an open set $W\subset\R^d$ such that the collection $(u_a(W))_{a\in E}$ is a disjoint collection of subsets of $W$ (see \cite{Hutchinson} for a thorough discussion). The \emph{limit set} of the IFS is the image of the \emph{coding map} $\pi:E^\N\to\R^d$ defined by
\[
\pi(\omega) = \lim_{n\to\infty} u_{\omega_1}\circ\cdots\circ u_{\omega_n}(0).
\]
\end{definition}
\begin{remark}
By intersecting with a ball centered at $\0$ of sufficiently large radius, we may without loss of generality assume that the open set $W$ is bounded.
\end{remark}

Let us now introduce some notation. Let $E^* = \bigcup_{n\geq 0} E^n$. For $\omega\in E^*$, let $|\omega|$ denote the length of $\omega$, and let
\[
u_\omega = u_{\omega_1}\circ\cdots\circ u_{\omega_{|\omega|}},
\]
with the convention that $u_\smallemptyset$ is the identity map.

Although we believe the following lemma is well-known to experts, we include its proof for completeness.% which appears implicitly in \cite[Proof of Theorem 5.3(1)]{Hutchinson} although not formally stated. 

\begin{lemma}
\label{lemmaOSC}
Let $(u_a)_{a\in E}$ be an IFS, and let $\Lambda$ denote the limit set of $(u_a)_{a\in E}$. There exists a constant $M\in\N$ such that for every set $S\subset\Lambda$, there exists a collection $A\subset E^*$ of cardinality at most $M$ with the following properties:
\begin{itemize}
\item[(i)] $S\subset \bigcup_{\omega\in A} u_\omega(\Lambda)$.
\item[(ii)] For all $\omega\in A$,
\begin{equation}
\label{uomegaS}
\|u_\omega'\| \leq \diam(S),
\end{equation}
where $\|u_\omega'\|$ denotes the contraction ratio of the similarity $u_\omega$.
\end{itemize}
\end{lemma}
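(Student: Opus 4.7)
The plan is to cover $S$ by a stopping-time antichain of cells $u_\omega(\Lambda)$ whose linear scales are all comparable to $r := \diam(S)$. I may assume $r > 0$, since the case $\diam(S)=0$ is effectively outside the statement (condition (ii) would force $\|u_\omega'\|\leq 0$). Using the remark following Definition \ref{definitionIFS}, I take $W$ bounded, and after a standard enlargement I further assume $\Lambda \subset \overline{W}$. Set $r_{\min} := \min_{a\in E}\|u_a'\| > 0$.

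First I would define
$$A_0 := \{\omega \in E^* : \|u_\omega'\| \leq r < \|u_{\omega^-}'\|\},$$
where $\omega^-$ denotes $\omega$ with its last letter removed and $\|u_\emptyset'\| := 1$ (one may reduce immediately to the case $r < 1$, else take $A=\{\emptyset\}$). Then $A_0$ is a finite antichain in the word tree, $\Lambda = \bigcup_{\omega \in A_0} u_\omega(\Lambda)$, and $r_{\min} r \leq \|u_\omega'\| \leq r$ for each $\omega \in A_0$. A routine induction using the open set condition shows that the sets $\{u_\omega(W) : \omega \in A_0\}$ are pairwise disjoint. I then set
$$A := \{\omega \in A_0 : u_\omega(\Lambda) \cap S \neq \emptyset\},$$
whereupon property (i) follows from the cover $\Lambda = \bigcup_{\omega\in A_0} u_\omega(\Lambda)$ and property (ii) is immediate.

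The remaining task is to bound $|A|$ by a constant depending only on the IFS. Fix $x_0 \in S$; for every $\omega \in A$ the cell $u_\omega(\Lambda)$ meets $B(x_0, r)$ and has diameter at most $\|u_\omega'\|\diam(\Lambda) \leq r\diam(\Lambda)$, so that $u_\omega(\Lambda) \subset B(x_0, r(1+\diam(\Lambda)))$. Using $\Lambda \subset \overline{W}$ and $\diam(u_\omega(W)) \leq r\diam(W)$ this yields
$$u_\omega(W) \subset B\bigl(x_0,\, r(1 + \diam(\Lambda) + \diam(W))\bigr).$$
On the other hand these sets are pairwise disjoint and each has Lebesgue volume $\|u_\omega'\|^d \mathrm{vol}(W) \geq (r_{\min} r)^d \mathrm{vol}(W)$. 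Comparing with the volume of the surrounding ball, which is $O(r^d)$, gives $|A| \leq M$ for a constant $M$ depending only on $E$, $r_{\min}$, $\diam(\Lambda)$, $\diam(W)$, and $\mathrm{vol}(W)$, i.e.\ only on the IFS.

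The only mildly delicate step is the initial setup: one must arrange simultaneously that the cells $u_\omega(W)$ are disjoint and that $\Lambda \subset \overline{W}$, so that the volume estimate captures the correct scale. This is standard in the theory (one may, for example, enlarge a bounded OSC open set by a small tubular neighborhood of $\Lambda$ and verify that the open set condition is preserved), and once it is in place every other step is a direct consequence of the tree structure of the IFS and a single volume count.
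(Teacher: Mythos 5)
Your proof is correct and follows essentially the same route as the paper's: define $A$ by a stopping rule at the scale $\diam(S)$ (an antichain in the word tree), note that the open set condition makes the sets $u_\omega(W)$ for $\omega\in A$ pairwise disjoint, and bound $\#(A)$ by a volume comparison inside a ball of radius $\asymp\diam(S)$. The only superfluous gyration is your ``standard enlargement'' to arrange $\Lambda\subset\overline W$: with $W$ bounded this already follows from the open set condition, since $\bigcup_a u_a(\overline W)\subset\overline W$ forces the attractor to lie in $\overline W$.
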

\begin{proof}
Let $\w A$ be the set of all words $\omega\in E^*$ which satisfy \eqref{uomegaS} but for which no proper initial segment satisfies \eqref{uomegaS}, and let $A = \{\omega\in \w A: u_\omega(\Lambda)\cap S \neq \emptyset\}$. Then (i) and (ii) are satisfied. To complete the proof, we must show that $\#(A)$ is bounded independent of $S$.

If $\emptyset\in A$, then $\#(A) = 1$. Thus we may assume $\emptyset\notin A$. Fix $\omega\in A$. The minimality of $\omega$ impiles that
\[
\|u_{\omega\given [1,|\omega| - 1]}'\| > \diam(S),
\]
where $\given$ denotes restriction. In particular, letting $\gamma = \min_{a\in E}\|u_a'\| > 0$, we have
\[
\|u_\omega'\| = \|u_{\omega_{|\omega|}}'\|\cdot\|u_{\omega\given [1,|\omega| - 1]}'\| > \gamma\diam(S).
\]
Moreover, if $\xx\in S$ then
\[
u_\omega(W)\subset B(\xx,\diam(S) + \diam(u_\omega(W))) \subset B(\xx,\diam(S)(1 + \diam(W))).
\]
Since no word in $A$ is an initial segment of another word in $A$, the open set condition implies that the collection $(u_\omega(W))_{\omega\in A}$ is disjoint. Letting $\lambda$ denote Lebesgue measure, we have
\begin{align*}
\diam(S)^d
\asymp \lambda(B(\xx,\diam(S)(1 + \diam(W))))
&\geq \sum_{\omega\in A}\lambda(u_\omega(W))\\
&= \sum_{\omega\in A}\|u_\omega'\|^d\lambda(W)
\asymp \sum_{\omega\in A}\diam(S)^d.
\end{align*}
Dividing both sides by $\diam(S)^d$, we see that $\#(A)$ is bounded from above independent of $S$.
\end{proof}

\section{Proof of Theorem \ref{theoremdirichlet} (Extrinsic analogue of Dirichlet's theorem)}
\label{sectiondirichlet}

\begin{proof}[Proof of Theorem \ref{theoremdirichlet}, case \text{(1)}]
Suppose that $S$ is the limit set of the IFS $(u_a)_{a\in E}$, and write $\Lambda = S$. By Corollary \ref{corollarydirichlet}, to complete the proof it suffices to show that there exists $N$ such that $\Lambda$ contains no $2$-roughly arithmetic progression of length $N$.

By contradiction, suppose that $\Lambda$ contains arbitrarily long $2$-roughly arithmetic progressions. For each $N$, let $P_N = (\xx_i)_0^N$ be a $2$-roughly arithmetic progression of length $(N + 1)$, and let $\gamma_N:\R\to\R^d$ be an affine transformation such that $\gamma_N(0) = \xx_0$ and $\gamma_N(1) = \xx_N$. Then $K_N := \gamma_N^{-1}(P_N)$ is also a $2$-roughly arithmetic progression; moreover, the left and right endpoints of $K_N$ are $0$ and $1$, respectively.

Let $M\in\N$ be as in Lemma \ref{lemmaOSC}. Then by Lemma \ref{lemmaOSC}, for each $N\in\N$ there is a collection $A_N\subset E^*$ of cardinality at most $M$ such that
\begin{equation}
\label{PNsubset}
P_N\subset \bigcup_{\omega\in A_N}u_\omega(\Lambda)
\end{equation}
and
\begin{equation}
\label{uprimegammaprime}
\|u_\omega'\| \leq \diam(P_N) = \|\gamma_N'\| \all \omega\in A_N.
\end{equation}
Enumerate the elements of $A_N$ by $\omega^{(N,1)},\ldots,\omega^{(N,M_N)}$ with $M_N\leq M$. For each $j = 1,\ldots,M$ let
\[
K_{N,j} = \left\{x\in K_N: \gamma_N(x)\in u_{\omega^{(N,j)}}(\Lambda)\right\}
\]
if $j\leq M_N$, and $K_{N,j} = \emptyset$ otherwise. By \eqref{PNsubset},
\[
K_N = \bigcup_{j = 1}^M K_{N,j}.
\]
By the compactness of $\KK^*([0,1])$ under the Vietoris topology \cite[Theorem 4.26]{Kechris}, there exists an increasing sequence $(N_k)_1^\infty$ such that for each $j = 1,\ldots,M$, the sequence $(K_{N_k,j})_{k = 1}^\infty$ converges to a set $K_{\infty,j} \in \KK^*([0,1])$. Since the finite union operation is continuous in the Vietoris topology \cite[Exercise 4.29(iv)]{Kechris}, by Lemma \ref{lemmahausdorfflimit} we have
\[
\bigcup_{j = 1}^M K_{\infty,j} = \lim_{N\to\infty}K_N = [0,1].
\]
Now by elementary topology, the union of nowhere dense sets is nowhere dense, and so there exists $j = 1,\ldots,M$ such that the set $K_{\infty,j}$ contains a nontrivial interval $[a,b]\subset[0,1]$. For each $N\in\N$, define $h_N:[0,1]\to\R^d$ by
\[
h_N = u_{\omega^{(N,j)}}^{-1}\circ \gamma_N.
\]
By the definition of $K_{N,j}$, we have $h_N(K_{N,j})\subset \Lambda$. On the other hand, by \eqref{uprimegammaprime} we have
\[
\|h_N'\| \geq 1.
\]
Since $h_N(0)$ and $h_N(1)$ are in the bounded set $\Lambda$, $\|h_N'\|$ is bounded independent of $N$. Let $(N_k)_1^\infty$ be an increasing sequence which is a subsequence of the previously chosen sequence and for which the sequence of affine functions $h_{N_k}$ converges locally uniformly to a non-constant affine function $h:[0,1]\to\R^d$. The map $(h,K)\mapsto h(K)$ is continuous from $(\text{locally uniform topology} \times \text{Vietoris topology})$ to the Vietoris topology \cite[(16.11)]{Simmons3}; thus
\[
h_{N_k}(K_{N_k,j}) \tendsto k h(K_{\infty,j}).
\]
But $h_{N_k}(K_{N_k,j})\subset \Lambda$ by construction, so $h(K_{\infty,j})\subset \Lambda$ by \cite[Exercise 4.29(ii)]{Kechris}. Since $K_{\infty,j}\supset[a,b]$, we have
\[
\Lambda\supset h([a,b]),
\]
i.e. $\Lambda$ contains a line segment, contradicting our hypothesis.
\end{proof}
\begin{proof}[Proof of Theorem \ref{theoremdirichlet}, case \text{(2)}]
Write $M = S$. The first step of the proof is to show that since $M$ does not contain a line segment, the cardinality of its intersection with any short enough line segment is bounded from above. Rigorously:

\begin{claim}
\label{claimrealanalytic}
For each $\xx\in M$, there exist a neighborhood $U$ of $\xx$ and an integer $N\in\N$ such that for every line $L$,
\[
\#(U\cap L)\leq N.
\]
\end{claim}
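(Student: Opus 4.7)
The plan is to use a local real-analytic graph representation of $M$ to reduce the intersection-with-a-line problem to counting zeros of a real-analytic family of one-variable functions, and then to obtain a uniform bound on the number of zeros by combining the Weierstrass preparation theorem with compactness of the parameter space.

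After a translation and rotation, I may assume $\xx = 0$ and $T_0 M = \R^k \times \{0\} \subset \R^d$, where $k = \dim M$. Near $0$, $M$ is then the graph of a real-analytic map $\phi : B \to \R^{d-k}$ on a ball $B \subset \R^k$ about $0$, with $\phi(0) = 0$ and $D\phi(0) = 0$. Take $U = \{(u,\phi(u)) : u \in B'\}$ for a smaller closed ball $B' \subset B$. For any line $L \subset \R^d$, either $L$ projects to a point in $\R^k$, in which case $\#(U \cap L) \leq 1$ because $\phi$ is single-valued, or $L$ admits a parametrization $s \mapsto (u_0 + s\tilde v,\, \phi(u_0) + s\tilde w)$ with $u_0 \in B'$, $\tilde v \in S^{k-1}$, and $\tilde w \in \R^{d-k}$; the points of $U \cap L$ then correspond exactly to the zeros, in the interval $I = \{s : u_0 + s\tilde v \in B'\}$, of the real-analytic function
\[
h(s) \;=\; \phi(u_0 + s\tilde v) - \phi(u_0) - s\tilde w .
\]
Since $\phi$ is Lipschitz on $\overline{B'}$ with some constant $K$, the existence of any nonzero zero of $h$ forces $|\tilde w| \leq K$, so I may restrict the parameter space to the compact set $P = \overline{B'} \times S^{k-1} \times \overline{B(0,K)}$.

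The line-segment hypothesis enters now: if $h_p$ vanished on some subinterval of $I$ for some $p \in P$, the image of that subinterval would be a genuine line segment inside $M$, contradicting the hypothesis. Hence for every $p \in P$ the real-analytic function $h_p$ is not identically zero, and so has only finitely many zeros on the compact interval $\overline I$. The main step, which I expect to be the chief obstacle, is to upgrade this pointwise finiteness to a \emph{uniform} bound as $p$ ranges over $P$. I would carry this out via the real-analytic Weierstrass preparation theorem applied jointly in $(p,s)$: near any $p_0 \in P$ and any zero $s_0$ of $h_{p_0}$ of multiplicity $n$, one writes $h_p(s)$ as a distinguished polynomial of degree $n$ in $s$ with real-analytic coefficients in $p$, times a nonvanishing factor, giving at most $n$ real roots near $s_0$ whenever $p$ is near $p_0$. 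Summing over the finitely many zeros of $h_{p_0}$ on $\overline I$ and then covering the compact set $P$ by finitely many such neighborhoods yields the desired uniform bound $N$, completing the proof.
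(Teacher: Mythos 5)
Your plan is correct, but it takes a genuinely different route than the paper does. You write $M$ locally as a real-analytic \emph{graph} and reduce the claim to a uniform zero count for the vector-valued real-analytic family $h_p(s)=\phi(u_0+s\tilde v)-\phi(u_0)-s\tilde w$, then invoke (real-analytic) Weierstrass preparation at each zero of $h_{p_0}$ together with compactness of the parameter space to get a locally uniform bound, and finish by a finite subcover of $P$. This is sound, provided you also note the routine compactness point you implicitly elide: outside the finitely many prepared neighborhoods in $s$, the chosen nonvanishing component $h_{p_0}^{(i)}$ is bounded away from zero on the remaining compact part of $\overline I$, so by continuity $h_p^{(i)}$ has no zeros there for $p$ close to $p_0$; only then can you sum the local degree bounds. (Also, the parametrization $s\mapsto(u_0+s\tilde v,\phi(u_0)+s\tilde w)$ presumes $L$ actually meets $U$, but that case is the only nontrivial one, and the restriction $|\tilde w|\le K$ is legitimate precisely when $h$ has at least two zeros.)

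The paper argues quite differently: it writes $M$ locally as a common zero set $\bigcap_i f_i^{-1}(0)$, assumes for contradiction that lines $L_N$ meet $M\cap B(\xx,1/N)$ in more than $N$ points, applies Rolle's theorem iteratively to the one-variable functions $f_i\circ\gamma_N$ to place a zero of each derivative $(f_i\circ\gamma_N)^{(j)}$ near $\xx$, passes to a subsequential limit of directions $\gamma_{N}'\to\vv$, and deduces $f_i^{(j)}(\xx)[\vv,\dots,\vv]=0$ for all $i,j$; real-analyticity is used only at the very end to conclude $f_i(\xx+t\vv)\equiv0$, i.e.\ a line segment in $M$. The two approaches buy slightly different things. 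The paper's Rolle/Taylor argument isolates exactly where analyticity is needed, which is why Remark~\ref{remarkCinfty} can relax the hypothesis to $\CC^\infty$ manifolds satisfying a pointwise nondegeneracy condition; your Weierstrass-preparation argument uses analyticity in both $p$ and $s$ in an essential way and would not obviously generalize in the same fashion. On the other hand, your approach is direct rather than by contradiction and produces the uniform bound $N$ together with the neighborhood $U$ in one sweep, and the joint analyticity in parameters is a clean way to package the compactness.
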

\begin{subproof}
By the implicit function theorem, if $U\subset\R^d$ is a sufficiently small neighborhood of $\xx$, then there exist real-analytic functions $f_1,\ldots,f_s:U\to\R$ such that $M\cap U = \bigcap_{i = 1}^s f_i^{-1}(0)$, where $s = d - \dim(M)$. By contradiction, for each $N\in\N$ large enough so that $B(\xx,1/N)\subset U$, choose a line $L_N$ so that
\[
\#(M\cap B(\xx,1/N)\cap L_N) > N.
\]
Parameterize $L_N$ by an affine transformation $\gamma_N:\R\to\R^d$ satisfying $\|\gamma_N'\| = 1$. Since the unit sphere $S^{d - 1}$ is compact, we may choose a sequence $(N_k)_1^\infty$ and a vector $\vv\in S^{d - 1}$ so that $\gamma_{N_k}' \tendsto k \vv$. Let $(a_N,b_N) = \gamma_N^{-1}(B(\xx,1/N))$.

Fix $i = 1,\ldots,s$ and $N\in\N$. Then $f_i\circ \gamma_N$ has at least $N$ zeros on $(a_N,b_N)$, since each point in $M\cap B(\xx,1/N)\cap L_N$ corresponds to a joint zero of $f_1,\ldots,f_s$ on $(a_N,b_N)$.
\begin{claim}
For each $j\leq N$, $(f_i\circ\gamma_n)^{(j)}$ has at least $(N - j)$ zeros on $(a_N,b_N)$.
\end{claim}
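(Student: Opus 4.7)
The claim is a clean induction on $j$ using Rolle's theorem, so I would set $g_N := f_i \circ \gamma_N$ and argue as follows.

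\emph{Base case $j=0$.} By construction, every point of $M \cap B(\xx, 1/N) \cap L_N$ is a joint zero of $f_1,\ldots,f_s$, and hence a zero of $f_i$. Pulling back along $\gamma_N$, each such point corresponds to a zero of $g_N$ in $(a_N,b_N) = \gamma_N^{-1}(B(\xx,1/N))$. Since $\#(M \cap B(\xx,1/N) \cap L_N) > N$, this yields at least $N = N - 0$ zeros of $g_N$ in $(a_N,b_N)$.

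\emph{Inductive step.} Assuming $g_N^{(j)}$ has at least $N - j$ distinct zeros $t_1 < t_2 < \cdots < t_{N-j}$ in $(a_N,b_N)$, apply Rolle's theorem to the (real-analytic, hence $C^\infty$) function $g_N^{(j)}$ on each of the $N-j-1$ consecutive subintervals $[t_m,t_{m+1}]$. Each application produces a zero of $g_N^{(j+1)}$ in $(t_m,t_{m+1}) \subset (a_N,b_N)$, yielding at least $N-j-1 = N - (j+1)$ distinct zeros of $g_N^{(j+1)}$. This closes the induction and establishes the claim for all $j \le N$.

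\emph{Comments on difficulty and the role of the claim.} This step is entirely routine; the only thing to verify is that $f_i \circ \gamma_N$ is smooth enough to differentiate $N$ times, which is immediate from real-analyticity of $f_i$ and affineness of $\gamma_N$. The real substance will come in exploiting the claim: I expect that in the next step one fixes an arbitrary $j$, observes that $g_{N_k}^{(j)}$ has a zero on the shrinking interval $(a_{N_k},b_{N_k})$ for all sufficiently large $k$, and passes to the limit $\gamma_{N_k}' \to \vv$ to conclude that the $j$-th derivative of $t \mapsto f_i(\xx + t\vv)$ vanishes at $t = 0$. Since $j$ is arbitrary, all Taylor coefficients of this real-analytic function of one variable vanish at $0$, so $f_i$ vanishes identically along the line $\xx + \R\vv$ near $\xx$. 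As this holds for every $i = 1,\ldots,s$, the line segment $\{\xx + t\vv : |t| < \delta\}$ lies in $M$, contradicting the hypothesis that $M$ contains no line segment. The current sub-claim is the enabler for this limiting argument, and the Rolle induction is the only mechanism needed to establish it.
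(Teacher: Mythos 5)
Your argument is correct and matches the paper's: the base case is exactly the observation, stated immediately before the claim, that $f_i\circ\gamma_N$ has at least $N$ zeros on $(a_N,b_N)$, and the inductive step is the same Rolle/mean-value-theorem argument applied between consecutive zeros of $(f_i\circ\gamma_N)^{(j)}$. Your closing remarks about how the claim feeds into the limiting argument via $\gamma_{N_k}'\to\vv$ and Taylor's theorem also track the paper's subsequent steps.
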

\begin{subproof}
Suppose the claim is true for $j < N$, and let $a_N < c_1 < \ldots < c_{N - j} < b_n$ be zeros of $(f_i\circ\gamma_n)^{(j)}$. By the mean value theorem, for each $k = 1,\ldots,N - (j + 1)$ there exists $c_k'\in (c_k,c_{k + 1})$ which is a zero of $(f_i\circ\gamma_n)^{(j + 1)}$. This completes the inductive step.
\end{subproof}
Now fix $j < N$, and let $c_{N,j}\in (a_N,b_N)$ be a zero of $(f_i\circ\gamma_N)^{(j)}$. We observe that by the chain rule,
\[
0 = (f_i\circ\gamma_N)^{(j)}(c_{N,j}) = f_i^{(j)}\circ\gamma_N(c_{N,j})[\gamma_N',\ldots,\gamma_N'].
\]
Here there are $j$ copies of $\gamma_N'$. Note that we have used the fact that $\gamma_N$ is affine to eliminate all terms involving a second order or higher derivative of $\gamma_N$, and to interpret $\gamma_N'$ as a vector rather than as a function whose output is a vector. Since $\gamma_N(c_{N,j})\in B(\xx,1/N)$, we have by Taylor's theorem
\begin{equation}
\label{bytaylorstheorem}
|f_i^{(j)}(\xx)[\gamma_N',\ldots,\gamma_N']| \leq (1/N)^{j + 1}\sup_{\yy\in B(\xx,1/N)}\|f_i^{(j + 1)}(\yy)\|.
\end{equation}
On the other hand, since $\gamma_{N_k}' \tendsto k \vv$ we have
\[
f_i^{(j)}(\xx)[\gamma_{N_k}',\ldots,\gamma_N'] \tendsto k f_i^{(j)}(\xx)[\vv,\ldots,\vv],
\]
which together with \eqref{bytaylorstheorem} implies that
\begin{equation}
\label{existslineanalyticversion}
f_i^{(j)}(\xx)[\vv,\ldots,\vv] = 0 \all i = 1,\ldots,s \all j\in\N.
\end{equation}
Since $f_1,\ldots,f_s$ are real-analytic, \eqref{existslineanalyticversion} implies that
\[
f_i(\xx + t\vv) = 0 \all i = 1,\ldots,s \all t\in\R\text{ sufficiently small}.
\]
Reinterpreting this statement in terms of the manifold $M$, we see that for sufficiently small $\varepsilon > 0$, the line segment $\xx + [-\varepsilon,\varepsilon]\vv$ is contained in $M$. This contradicts our hypothesis.
\end{subproof}
\begin{remark}
\label{remarkCinfty}
In the above proof, and for the remainder of the proof of Theorem \ref{theoremdirichlet}, the only step where we need $M$ to be real-analytic is the step where we use \eqref{existslineanalyticversion} to deduce the existence of a line segment contained in $M$. If $M$ is assumed to be $\CC^\infty$ and if we assume that \eqref{existslineanalyticversion} does not hold for any pair $(\xx,\vv)$, then the conclusion of Theorem \ref{theoremdirichlet} holds.
\end{remark}

We now claim that for any compact set $K\subset M$, the conclusion of Theorem \ref{theoremdirichlet} holds, with the constant $C_\xx$ depending only on $K$. Indeed, fix such a $K$, let $V\subset M$ be a neighborhood of $K$ which is relatively compact in $M$, and let $\w K = \cl V$. We use the compactness of $\w K$ to change the local principle of Claim \ref{claimrealanalytic} to a global one:

\begin{claim}
\label{claimcorollary}
There exists $N\in\N$ such that for every line $L$,
\[
\#(\w K\cap L) < N.
\]
\end{claim}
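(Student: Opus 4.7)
The plan is to promote the local statement of Claim \ref{claimrealanalytic} to a global one by a standard finite-subcover argument, using the compactness of $\w K$. The key observation that makes this immediate is that the integer $N$ produced in Claim \ref{claimrealanalytic} at a point $\xx$ is uniform over all lines $L$, so passing from one neighborhood to a finite collection only costs a summation, not a supremum over an uncountable family.

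Concretely, first I would apply Claim \ref{claimrealanalytic} at every point $\xx\in \w K$ to obtain an open neighborhood $U_\xx$ of $\xx$ and an integer $N_\xx\in\N$ such that $\#(U_\xx\cap L)\leq N_\xx$ for every line $L\subset\R^d$. The collection $\{U_\xx:\xx\in \w K\}$ is then an open cover of $\w K$. Since $\w K = \cl V$ with $V$ relatively compact in $M$, the set $\w K$ is compact, so this cover admits a finite subcover $U_{\xx_1},\ldots,U_{\xx_r}$.

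For any line $L\subset\R^d$, the inclusion $\w K\cap L\subseteq \bigcup_{i=1}^r (U_{\xx_i}\cap L)$ together with the union bound yields
\[
\#(\w K\cap L)\leq \sum_{i=1}^r \#(U_{\xx_i}\cap L)\leq \sum_{i=1}^r N_{\xx_i},
\]
so setting $N := 1+\sum_{i=1}^r N_{\xx_i}$ gives the required strict inequality $\#(\w K\cap L)<N$ uniformly in $L$.

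There is no real obstacle here: Claim \ref{claimcorollary} is essentially a repackaging of Claim \ref{claimrealanalytic} in which local-to-global passage is effected by compactness of $\w K$ and a one-line union bound. The substantive content (ruling out accumulation of intersection points via real-analyticity and the no-line-segment hypothesis) has already been absorbed in the proof of Claim \ref{claimrealanalytic}.
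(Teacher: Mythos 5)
Your proof is correct and follows exactly the same route as the paper: apply Claim \ref{claimrealanalytic} at each point of $\w K$, extract a finite subcover by compactness, and take $N = 1 + \sum_i N_{\xx_i}$ via the union bound.
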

\begin{subproof}
For each $\xx\in M$, let $U_\xx$ and $N_\xx$ be as in Claim \ref{claimrealanalytic}. Then $(U_\xx)_{\xx\in \w K}$ is a cover of $\w K$; let $(U_{\xx_i})_{i = 1}^k$ be a finite subcover. The corollary holds with $N = \sum_{i = 1}^k N_{\xx_i} + 1$.
\end{subproof}

Thus $\w K$ contains no collinear $N$-tuples of distinct points, and in particular, $\w K$ contains no $2$-roughly arithmetic progressions of length $N$. So by Corollary \ref{corollarydirichlet}, there exists $C > 0$ such that for all $\xx\in K\butnot\Q^d$, there exist infinitely many $\pp/q\in\Q^d\butnot \w K$ satisfying \eqref{extrinsic}. Since $K$ is contained in the interior of $\w K$ relative to $M$, only finitely many of these rational points can satisfy $\pp/q\in M\butnot\w K$. Thus there infinitely many $\pp/q\in\Q^d\butnot M$ satisfying \eqref{extrinsic}.
\end{proof}

\ignore{

\section{Proof of Theorem \ref{theoremextrinsicfractals}}

\begin{theorem}
\label{theoremextrinsicfractals}
Let $J\subset\R^d$ be the limit set of an IFS, and assume that $J$ does not contain any line segment. Then there exists $C > 0$ such that for all $\xx\in J\butnot\Q^d$, there exist infinitely many $\pp/q\in\Q^d\butnot J$ satisfying
\begin{equation}
\label{extrinsic}
\left\|\xx - \frac{\pp}{q}\right\| \leq \frac{C}{q^{1 + 1/d}}\cdot
\end{equation}
\end{theorem}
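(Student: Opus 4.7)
The plan is to reduce the statement to Corollary \ref{corollarydirichlet} by establishing, via contradiction, a uniform bound on the length of $2$-roughly arithmetic progressions contained in $J$. Concretely, assume for contradiction that for every $N\in\N$ the set $J$ contains a $2$-roughly arithmetic progression $P_N$ of length $N+1$. Choose an affine parameterization $\gamma_N:\R\to\R^d$ with $\gamma_N(0)$ and $\gamma_N(1)$ being the endpoints of $P_N$, so that $K_N:=\gamma_N^{-1}(P_N)\subset[0,1]$ is still a $2$-roughly arithmetic progression whose left and right endpoints are $0$ and $1$. Lemma \ref{lemmahausdorfflimit} then guarantees $K_N\to[0,1]$ in the Vietoris topology. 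The goal is to promote this ``denseness at the level of the line'' into a genuine line segment inside $J$, contradicting the hypothesis.

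The next step exploits the IFS structure through Lemma \ref{lemmaOSC} applied with $S=P_N$: this yields a collection $A_N\subset E^*$ of cardinality at most some fixed $M$, with $P_N\subset\bigcup_{\omega\in A_N}u_\omega(J)$ and $\|u_\omega'\|\le\diam(P_N)=\|\gamma_N'\|$ for every $\omega\in A_N$. Enumerating $A_N$ as $\omega^{(N,1)},\dots,\omega^{(N,M_N)}$ (padding with empty sets if $M_N<M$), partition $K_N=\bigcup_{j=1}^M K_{N,j}$ where $K_{N,j}=\gamma_N^{-1}(P_N\cap u_{\omega^{(N,j)}}(J))$. By compactness of $\mathcal K^*([0,1])$ in the Vietoris topology, pass to a subsequence along which $K_{N_k,j}\to K_{\infty,j}$ for each $j$; continuity of finite union gives $\bigcup_j K_{\infty,j}=[0,1]$. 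A Baire category argument then forces some $K_{\infty,j}$ to contain a nondegenerate interval $[a,b]$.

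The final step transfers this interval into $J$. Define the affine maps $h_N:=u_{\omega^{(N,j)}}^{-1}\circ\gamma_N$; by construction $h_N(K_{N,j})\subset J$. The contraction ratio inequality $\|u_{\omega^{(N,j)}}'\|\le\|\gamma_N'\|$ gives $\|h_N'\|\ge 1$, while $h_N(0),h_N(1)\in J$ and the boundedness of $J$ give a uniform upper bound on $\|h_N'\|$. After passing to a further subsequence, $h_{N_k}$ converges locally uniformly to a nonconstant affine map $h:[0,1]\to\R^d$. Continuity of the map $(h,K)\mapsto h(K)$ and closedness of $J$ yield $h(K_{\infty,j})\subset J$, hence $h([a,b])\subset J$. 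This is a line segment inside $J$, contradicting the hypothesis.

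I expect the main obstacle to be the last paragraph: one must simultaneously control two affine sequences $\gamma_N$ and $u_{\omega^{(N,j)}}^{-1}$ whose norms both blow up, while keeping their composition $h_N$ uniformly bi-Lipschitz so a nonconstant affine limit exists. The bound $\|u_\omega'\|\le\diam(P_N)$ from Lemma \ref{lemmaOSC} is precisely the mechanism that makes this possible, and is why the version of the open set condition built into Definition \ref{definitionIFS} suffices. Everything else is an exercise in taking diagonal subsequences in Vietoris-compact spaces and invoking closedness of the limit set.
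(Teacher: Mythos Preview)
Your proposal is correct and follows essentially the same route as the paper's proof of case (1) of Theorem \ref{theoremdirichlet}: reduce to Corollary \ref{corollarydirichlet}, assume arbitrarily long $2$-roughly arithmetic progressions, cover each $P_N$ by Lemma \ref{lemmaOSC}, pass to Vietoris limits of the pieces $K_{N,j}$, extract an interval in some $K_{\infty,j}$, and use a compactness argument on the normalized affine maps $h_N=u_{\omega^{(N,j)}}^{-1}\circ\gamma_N$ to produce a line segment in $J$. The one small point to be careful with is your claim that $h_N(0),h_N(1)\in J$ (the paper makes the same assertion): strictly speaking one only knows $h_N(K_{N,j})\subset J$, but since $K_{N_k,j}\to K_{\infty,j}\supset[a,b]$ one can instead bound $\|h_{N_k}'\|$ using two points of $K_{N_k,j}$ at distance at least $(b-a)/2$, which suffices.
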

We proceed by way of contradiction. Fix $N\in\N$, and suppose that the conclusion of Theorem \ref{theoremextrinsicfractals} does not hold for $C = (1 + N)^{1 + 1/d}$. Then there exists $\xx_N\in J\butnot\Q^d$ such that only finitely many rationals $\pp/q\in\Q^d\butnot J$ satisfy \eqref{extrinsic}. Choose $Q_N > 0$ larger than the denominators of each of these rationals, and apply Lemma \ref{lemmagoodpair} to get a good pair $(\rr_0^{(N)},\rr_\infty^{(N)})$ with $q_0^{(N)}\geq Q_N$. For $s\in\R$, let $\rr_s^{(N)}$ be defined as in \eqref{ridef}. Then by Claim \ref{claimextrinsic2}, $\rr_i^{(N)}$ satisfies \eqref{extrinsic} for $i = 0,\ldots,N$; thus
\begin{equation}
\label{negation}
\frac{\pp_i^{(N)}}{q_i^{(N)}}\in J \text{ for } i = 0,\ldots,N.
\end{equation}

As indicated in Section 2, the idea now is to show that as $N$ tends to infinity, the sequence $(\pp_i^{(N)}/q_i^{(N)})_{i = 0}^N$ looks more and more like an arithmetic progression. Moving these roughly arithmetic progressions to the large scale and using compactness to find a limit, the result is a conformal image of a line segment which is contained in $J$. This will contradict our hypothesis.

We make the notion of a roughly arithmetic progression rigorous in the following lemma:

\begin{lemma}
\label{lemmaroughlyarithmetic}
For all $s\geq 0$ we have
\[
\frac{\displaystyle \left\|\left(\frac{\del}{\del s}\right)^2\left[\frac{\pp_s^{(N)}}{q_s^{(N)}}\right]\right\|}{\left\|\displaystyle \left(\frac{\del}{\del s}\right)\left[\frac{\pp_s^{(N)}}{q_s^{(N)}}\right]\right\|}
\leq \frac{2}{1 + s}\cdot
\]
\end{lemma}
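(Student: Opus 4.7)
The plan is to compute the derivatives on the left-hand side explicitly and observe that the ratio simplifies to a quantity directly controlled by the good pair inequality $q_\infty \leq q_0$ from Lemma \ref{lemmagoodpair}(ii).

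First I would write $f(s) := \pp_s^{(N)}/q_s^{(N)} = (\pp_0 + s\pp_\infty)/(q_0 + sq_\infty)$ (dropping the superscript $(N)$ for readability). By the quotient rule, $f'(s) = [\pp_\infty q_s - \pp_s q_\infty]/q_s^2$, and since $\pp_s q_\infty - \pp_\infty q_s = (\pp_0 + s\pp_\infty)q_\infty - \pp_\infty(q_0 + sq_\infty) = \pp_0 q_\infty - \pp_\infty q_0$, the numerator is the constant vector $\mathbf{w} := \pp_\infty q_0 - \pp_0 q_\infty$, independent of $s$. Thus $f'(s) = \mathbf{w}/q_s^2$ and differentiating again gives $f''(s) = -2q_\infty \mathbf{w}/q_s^3$.

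Taking norms and forming the ratio, the $\mathbf{w}$ factors cancel and we get
\[
\frac{\|f''(s)\|}{\|f'(s)\|} \;=\; \frac{2q_\infty}{q_s} \;=\; \frac{2q_\infty}{q_0 + sq_\infty}.
\]
To finish, I would observe that this is at most $2/(1+s)$ if and only if $q_\infty(1+s) \leq q_0 + sq_\infty$, i.e. $q_\infty \leq q_0$. But this is precisely the inequality built into the definition of a good pair (Lemma \ref{lemmagoodpair}(ii)), so the conclusion is immediate.

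There is no real obstacle here; the content of the lemma is essentially the algebraic identity that the numerator of $f'(s)$ is a Wronskian-like expression independent of $s$, and the role of the good pair hypothesis is exactly to force $q_\infty \leq q_0$ so that the curvature-to-velocity ratio is dominated by the uniform bound $2/(1+s)$.
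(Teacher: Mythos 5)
Your proof is correct and follows essentially the same route as the paper's: both compute $f'(s) = (\pp_\infty q_0 - q_\infty\pp_0)/q_s^2$ via the quotient rule, reduce the curvature-to-velocity ratio to $2q_\infty/(q_0 + sq_\infty)$, and close with the good-pair inequality $q_\infty \leq q_0$. The only cosmetic difference is that the paper packages the ratio computation as a logarithmic derivative of $q_s^{-2}$, whereas you differentiate $\mathbf w/q_s^2$ directly; the cancellation of the constant vector $\mathbf w$ is the shared key observation.
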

\begin{proof}
For convenience we omit the superscript of $N$.
\begin{align*}
\frac{\pp_s}{q_s} &= \frac{\pp_0 + s \pp_\infty}{q_0 + s q_\infty}\\
\left(\frac{\del}{\del s}\right)\left[\frac{\pp_s}{q_s}\right] &= \frac{\pp_\infty q_0 - q_\infty \pp_0}{(q_0 + s q_\infty)^2}\\
\frac{\displaystyle \left\|\left(\frac{\del}{\del s}\right)^2\left[\frac{\pp_s}{q_s}\right]\right\|}{\left\|\displaystyle \left(\frac{\del}{\del s}\right)\left[\frac{\pp_s}{q_s}\right]\right\|}
&= \frac{\displaystyle \left|\left(\frac{\del}{\del s}\right)\left[\frac{1}{(q_0 + sq_\infty)^2}\right]\right|}{\left|\displaystyle \left[\frac{1}{(q_0 + sq_\infty)^2}\right]\right|}\\
&= \left|\left(\frac{\del}{\del s}\right)\log\left[\frac{1}{(q_0 + sq_\infty)^2}\right]\right|\\
&= \left|-2 \left(\frac{\del}{\del s}\right)\log[q_0 + s q_\infty]\right|\\
&= 2\frac{q_\infty}{q_0 + s q_\infty}\\
&\leq 2\frac{q_\infty}{q_\infty + s q_\infty} \since{$q_\infty\leq q_0$}\\
&= \frac{2}{1 + s}\cdot
\end{align*}
\end{proof}
This lemma indicates that the function $s\mapsto \pp_s^{(N)}/q_s^{(N)}$ gets ``more linear'' as $s$ tends to infinity.

Now define $f_N:[0,1]\rightarrow\R$ by
\[
f_N(s) = N - s\lfloor\sqrt N\rfloor
\]
and $g_N:[0,1]\rightarrow\R^d$ by
\[
g_N(s) = \frac{\pp_{f_N(s)}^{(N)}}{q_{f_N(s)}^{(N)}}\cdot
\]
The following observation follows directly from (\ref{negation}):
\begin{observation}
\label{observationprogression}
Let
\[
K_N = \left\{\frac{i}{\lfloor\sqrt N\rfloor}: i = 0,\ldots,\sqrt N\right\}.
\]
Then $g_N(K_N)\subset J$.
\end{observation}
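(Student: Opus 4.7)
The plan is essentially a direct substitution, which the text itself signals when it says the observation follows ``directly'' from (\ref{negation}). The observation asserts that when the rescaled parameter $s$ takes values in the finite grid
\[
K_N = \left\{\frac{i}{\lfloor\sqrt N\rfloor}: i = 0,\ldots,\lfloor\sqrt N\rfloor\right\},
\]
the image $g_N(s)$ lies in $J$. Since $g_N$ is defined by $g_N(s) = \pp_{f_N(s)}^{(N)}/q_{f_N(s)}^{(N)}$ with $f_N(s) = N - s\lfloor\sqrt N\rfloor$, the proof amounts to checking that $f_N$ returns integer values in the interval $[0,N]$ on the grid $K_N$, so that (\ref{negation}) can be invoked.

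First I would evaluate $f_N$ on $K_N$. For $s = i/\lfloor\sqrt N\rfloor$ a direct computation gives
\[
f_N\!\left(\frac{i}{\lfloor\sqrt N\rfloor}\right) = N - i.
\]
As $i$ ranges over the integers $\{0,1,\ldots,\lfloor\sqrt N\rfloor\}$, the output $N-i$ ranges over $\{N,N-1,\ldots,N-\lfloor\sqrt N\rfloor\}$, which (for every $N\geq 1$) is a subset of $\{0,1,\ldots,N\}$. In particular, each value of $f_N$ on $K_N$ is a legitimate index in the arithmetic progression $(\rr_i^{(N)})_{i=0}^{N}$.

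Then I would plug in and invoke the negation hypothesis. For each $s\in K_N$ of the above form,
\[
g_N(s) = \frac{\pp_{N-i}^{(N)}}{q_{N-i}^{(N)}},
\]
and (\ref{negation}) asserts precisely that $\pp_j^{(N)}/q_j^{(N)} \in J$ for every integer $j\in\{0,1,\ldots,N\}$. Taking $j = N-i$ therefore yields $g_N(s)\in J$ for every $s\in K_N$, i.e.\ $g_N(K_N)\subset J$, which is the claim.

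There is essentially no obstacle here; the observation is really a bookkeeping lemma whose role is to set up the subsequent analysis. The purpose of introducing $g_N$ and the grid $K_N$ is to repackage the information from (\ref{negation})---which is discrete, scale-dependent, and indexed by $i = 0,\ldots,N$---as a statement about a uniformly-parameterized function on the fixed unit interval $[0,1]$. This repackaging is what will later allow one to pass to a Hausdorff (Vietoris) limit as $N\to\infty$ using Lemma \ref{lemmahausdorfflimit} and extract a genuine line segment contained in $J$, contradicting the hypothesis on $J$.
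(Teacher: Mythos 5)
Your proof is correct and matches the paper's (implicit) argument: the paper simply asserts the observation ``follows directly from (\ref{negation})'', and your unfolding---computing $f_N(i/\lfloor\sqrt N\rfloor) = N-i$, checking that $N-i$ ranges over a subset of $\{0,\ldots,N\}$, and then invoking (\ref{negation})---is exactly the intended bookkeeping. Nothing further is needed.
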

\begin{lemma}
\label{lemmaprogression}
\[
\sup_{[0,1]}\frac{\|g_N''\|}{\|g_N'\|} \lesssim \frac{1}{\sqrt N}\cdot
\]
\end{lemma}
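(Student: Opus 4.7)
The plan is to reduce the bound to a direct application of Lemma \ref{lemmaroughlyarithmetic} via the chain rule, exploiting the fact that $f_N$ is affine so its second derivative vanishes. Specifically, write $h(s) = \pp_s^{(N)}/q_s^{(N)}$, so that $g_N = h\circ f_N$. Since $f_N'(s) \equiv -\lfloor\sqrt N\rfloor$ and $f_N''(s)\equiv 0$, the chain rule gives
\[
g_N'(s) = -\lfloor\sqrt N\rfloor\, h'(f_N(s)), \qquad g_N''(s) = \lfloor\sqrt N\rfloor^2\, h''(f_N(s)).
\]

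Taking norms and dividing, the factors of $\lfloor\sqrt N\rfloor$ partially cancel:
\[
\frac{\|g_N''(s)\|}{\|g_N'(s)\|} \;=\; \lfloor\sqrt N\rfloor\,\frac{\|h''(f_N(s))\|}{\|h'(f_N(s))\|}.
\]
Now I would invoke Lemma \ref{lemmaroughlyarithmetic} at the point $t = f_N(s)$, which yields $\|h''(t)\|/\|h'(t)\| \leq 2/(1+t)$. Since $f_N$ is decreasing on $[0,1]$ with $f_N(1) = N - \lfloor\sqrt N\rfloor$, we have $1 + f_N(s) \geq 1 + N - \lfloor\sqrt N\rfloor$ for every $s\in[0,1]$.

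Combining these estimates,
\[
\sup_{s\in[0,1]}\frac{\|g_N''(s)\|}{\|g_N'(s)\|} \;\leq\; \frac{2\lfloor\sqrt N\rfloor}{1 + N - \lfloor\sqrt N\rfloor} \;\lesssim\; \frac{1}{\sqrt N},
\]
for all sufficiently large $N$, which is the desired bound. There is no genuine obstacle; the only thing to be careful about is that $f_N$ is affine (so that no extra $f_N''\,h'$ term appears and the ratio does not blow up), and that the relevant parameter $t = f_N(s)$ stays of order $N$ throughout $[0,1]$, which is ensured by the choice $f_N(s) = N - s\lfloor\sqrt N\rfloor$.
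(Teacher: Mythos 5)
Your proof is correct and follows essentially the same route as the paper's: both use the chain rule together with the fact that $f_N$ is affine (so that $f_N''\equiv 0$ and no extra term appears), then invoke Lemma \ref{lemmaroughlyarithmetic} at $y=f_N(s)$ and bound $1+f_N(s)\geq 1+N-\lfloor\sqrt N\rfloor$ to obtain $\lfloor\sqrt N\rfloor\cdot\tfrac{2}{1+N-\lfloor\sqrt N\rfloor}\asymp N^{-1/2}$. The only difference is that you spell out the chain-rule computation explicitly, which the paper leaves implicit.
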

\begin{proof}
Since $f_N$ is affine,
\begin{align*}
\frac{\|g_N''(s)\|}{\|g_N'(s)\|} &= \frac{\displaystyle \left\|\left(\frac{\del}{\del y}\right)^2\left[\frac{\pp_y^{(N)}}{q_y^{(N)}}\right]\right\|}{\left\|\displaystyle \left(\frac{\del}{\del y}\right)\left[\frac{\pp_y^{(N)}}{q_y^{(N)}}\right]\right\|}\Big|_{y = f_N(s)}\cdot |f_N'(s)|\\
&\leq \frac{2}{1 + f_N(s)}|f_N'(s)| \by{Lemma \ref{lemmaroughlyarithmetic}}\\
&\leq \frac{2}{1 + N - \lfloor\sqrt N\rfloor}\lfloor\sqrt N\rfloor \asymp \frac{1}{N}\sqrt N = \frac{1}{\sqrt N}\cdot
\end{align*}
\end{proof}
Observation \ref{observationprogression} and Lemma \ref{lemmaprogression} can be taken together as formalizing the idea that $J$ contains longer and longer sequences which get closer and closer to being arithmetic progressions.

Let $L_N = g_N([0,1])$. By Observation \ref{observationline}, $L_N$ is a line segment. Let $W$ be the set from the convex open set condition. By intersecting with $B(\0,R)$ for sufficiently large $R$, we may assume that $W$ has finite diameter. Let
\[
A_N = \bigcup_{n\in\N}\left\{\omega\in E^n: u_\omega(\cl W)\cap L_N \neq \emptyset, \; \|u_\omega'\| \leq \frac{\diam(L_N)}{\diam(W)}, \text{ and } \|u_{\omega_1^{n - 1}}'\| > \frac{\diam(L_N)}{\diam(W)} \right\}.
\]
Then for each $\omega\in A_N$, we have $\|u_\omega'\| > \gamma\diam(L_N)/\diam(W)$, where $\gamma = \min_{a\in E}\|u_a'\|$. Moreover,
\[
u_\omega(W)\subset B(L_N,\diam(L_N)).
\]
Since any two strings in $A_N$ are incomparable, the collection $(u_\omega(W))_{\omega\in A_N}$ is disjoint. Thus
\begin{align*}
\diam(L_N)^d
\asymp \lambda(B(L_N,\diam(L_N)))
&\geq \sum_{\omega\in A_N}\lambda(u_\omega(W))\\
&= \sum_{\omega\in A_N}\|u_\omega'\|^d\lambda(W)
\asymp \sum_{\omega\in A_N}\diam(L_N)^d.
\end{align*}
Dividing both sides by $\diam(L_N)^d$, we see that $\#(A_N)$ is bounded from above, say by $M$. Enumerate the elements of $A_N$ by $\omega^{(N,1)},\ldots,\omega^{(N,M_N)}$ with $M_N\leq M$. For each $j = 1,\ldots,M$ let
\[
K_{N,j} = \left\{x\in K_N: g_N(x)\in u_{\omega^{(N,j)}}(J)\right\}
\]
if $j\leq M_N$, and $K_{N,j} = \emptyset$ otherwise. By Observation \ref{observationprogression},
\[
K_N = \bigcup_{j = 1}^M K_{N,j}.
\]
Let $(N_k)_1^\infty$ be an increasing sequence such that for each $j = 1,\ldots,M$, the sequence $(K_{N_k,j})_{k = 1}^\infty$ converges in the Hausdorff metric to a set $K_{\infty,j}$; this is possible since the set of closed subsets of $[0,1]$ with the Hausdorff metric is a compact metric space. Since the finite union operation is continuous in the Hausdorff metric, we have
\[
\bigcup_{j = 1}^M K_{\infty,j} = \lim_{N\to\infty}K_N = [0,1].
\]
Now by elementary topology, the union of nowhere dense sets is nowhere dense, and so there exists $j = 1,\ldots,M$ such that the set $K_{\infty,j}$ contains a nontrivial interval $[a,b]\subset[0,1]$. For each $N\in\N$, define $h_N:[0,1]\to\R^d$ by
\[
h_N = u_{\omega^{(N,j)}}^{-1}\circ g_N.
\]
By the definition of $K_{N,j}$, we have $h_N(K_{N,j})\subset J$. On the other hand, by Lemma \ref{lemmaprogression}, we have
\[
\sup_{[0,1]}\frac{\|h_N''\|}{\|h_N'\|} \lesssim \frac{1}{\sqrt N}\cdot
\]
On the other hand,
\[
\|h_N'\| \asymp \|(u_{\omega^{(N,j)}}^{-1})'\|\diam(L_N) \asymp 1\selfnote{more details needed}
\]
and thus
\[
\sup_{[0,1]}\|h_N''\| \lesssim \frac{1}{\sqrt N}\cdot
\]
Let $(N_k)_1^\infty$ be an increasing sequence which is a subsequence of the previously chosen sequence and for which $h_{N_k}$ converges in the $\CC^2$ topology to a function $h:[0,1]\to\R^d$. Then for all $x\in[0,1]$
\begin{align*}
\|h'(x)\| = \lim_{N\to\infty}\|h_N'(x)\| &\asymp 1\\
\|h''(x)\| = \lim_{N\to\infty}\|h_N''(x)\| &=_\pt 0,
\end{align*}
which demonstrates that $h$ is a non-constant affine map. On the other hand, the map $(h,K)\mapsto h(K)$ is continuous from $(\CC^2\text{ topology},\text{Hausdorff topology})$ to the Hausdorff topology; thus
\[
h_{N_k}(K_{N_k,j}) \tendsto k h(K_{\infty,j}).
\]
But $h_{N_k}(K_{N_k,j})\subset J$ by construction, so $h(K_{\infty,j})\subset J$. Since $K_{\infty,j}\supset[a,b]$, we have
\[
J\subset h([a,b]),
\]
i.e. $J$ contains a line segment, contradicting our hypothesis.

}% end ignore

\ignore{

\section{Completion of the proof in the case where the IFS satisfies SSC}
In this case, for each $N\in\N$ let $\omega\in E^n$ be the longest word so that $g_N([0,1])\subset u_\omega(\w{W})$. Then
\[
\|u_\omega'\|_W \asymp \diam(g_N([0,1]))
\]
and thus
\[
\diam(u_\omega^{-1}\circ g_N([0,1])) \asymp 1.
\]
On the other hand, for each $i = 0,\ldots,\lfloor\sqrt N\rfloor$ we have
\[
u_\omega^{-1}\circ g_N\left(\frac{i}{\lfloor\sqrt N\rfloor}\right)\in J
\]
but by the bounded distortion property, we have
\[
\dist\left(u_\omega^{-1}\circ g_N\left(\frac{i}{\lfloor\sqrt N\rfloor}\right),u_\omega^{-1}\circ g_N\left(\frac{i + 1}{\lfloor\sqrt N\rfloor}\right)\right) \asymp \frac{1}{\sqrt N}
\]
and thus if $N$ is sufficiently large, then $u_\omega^{-1}\circ g_N([0,1])\subset u_a(\cl{W})$ for some $a\in E$. This is a contradiction.

}% end ignore

\section{Proof of Proposition \ref{propositionvonkoch} (The von Koch curve does not contain a line segment)}
\label{sectionvonkoch}

Recall (cf. \cite[\63.3(2)]{Hutchinson}) that the von Koch snowflake curve is the limit set of the IFS on $\R^2 \equiv \C$ generated by the similarities
\begin{align*}
u_1(z) &= \frac 13 z\\
u_2(z) &= \frac 13 e^{\pi i/3} z + \frac 13\\
u_3(z) &= \frac 13 e^{-\pi i/3} z + \frac 13 + \frac 13 e^{2\pi i/3}\\
u_4(z) &= \frac 13 z + \frac 23.
\end{align*}
This IFS satisfies the open set condition with respect to the equilateral triangle $W$ whose vertices are $0$, $1$, and $e^{\pi i/3}$ (cf. Figure \ref{figurevonkoch}). Denote the von Koch curve by $\Lambda$.

{\bf Convention.} In this proof, line segments are not considered to contain their endpoints.

By contradiction, suppose that the von Koch curve contains a line segment $L\subset \Lambda$. Without loss of generality, suppose that
\begin{itemize}
\item[(i)] The number of endpoints of $L$ contained in $\bigcup_a \del(u_a(W))$ is maximal among line segments contained in $\Lambda$.
\item[(ii)] The length of $L$ is maximal given (i).
\end{itemize}
We observe that $L$ cannot be contained in $u_a(W)$ for any $a\in E$; otherwise $u_a^{-1}(L)$ would be a line segment longer than $L$ but also satisfying (i). Since $L$ is connected, it follows that $L\butnot \bigcup_a u_a(W)\neq \emptyset$. Fix $x\in L\butnot \bigcup_a u_a(W) \subset L\cap \bigcup_a \del(u_a(W))$. Then $L\butnot\{x\}$ is the union of two line segments $L^1$ and $L^2$. Let $n$ denote the number of endpoints of $L$ contained in $\bigcup_a \del(u_a(W))$; we claim that $n = 2$. Indeed, if not, then either $L^1$ or $L^2$ has $(n + 1)$ endpoints contained in $\bigcup_a \del(u_a(W))$. (If $n = 0$, both $L^1$ and $L^2$ have this property; if $n = 1$, only one of them does.)

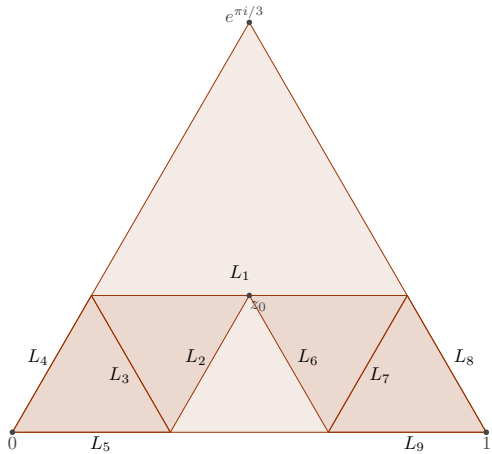
\begin{figure}
\scalebox{.7}{
\definecolor{xdxdff}{rgb}{0.49,0.49,1}
\definecolor{uuuuuu}{rgb}{0.27,0.27,0.27}
\definecolor{zzttqq}{rgb}{0.6,0.2,0}
\definecolor{qqqqff}{rgb}{0,0,1}
\begin{tikzpicture}[line cap=round,line join=round,>=triangle 45,x=1.0cm,y=1.0cm]
\clip(-4.25,-2.2) rectangle (4.95,6.5);
\fill[color=zzttqq,fill=zzttqq,fill opacity=0.1] (-4.16,-1.6) -- (4.84,-1.6) -- (0.34,6.19) -- cycle;
\fill[color=zzttqq,fill=zzttqq,fill opacity=0.1] (-4.16,-1.6) -- (-1.16,-1.6) -- (-2.66,1) -- cycle;
\fill[color=zzttqq,fill=zzttqq,fill opacity=0.1] (-2.66,1) -- (-1.16,-1.6) -- (0.34,1) -- cycle;
\fill[color=zzttqq,fill=zzttqq,fill opacity=0.1] (1.84,-1.6) -- (4.84,-1.6) -- (3.34,1) -- cycle;
\fill[color=zzttqq,fill=zzttqq,fill opacity=0.1] (0.34,1) -- (1.84,-1.6) -- (3.34,1) -- cycle;
\draw [color=zzttqq] (-4.16,-1.6)-- (4.84,-1.6);
\draw [color=zzttqq] (4.84,-1.6)-- (0.34,6.19);
\draw [color=zzttqq] (0.34,6.19)-- (-4.16,-1.6);
\draw [color=zzttqq] (-4.16,-1.6)-- (-1.16,-1.6);
\draw [color=zzttqq] (-1.16,-1.6)-- (-2.66,1);
\draw [color=zzttqq] (-2.66,1)-- (-4.16,-1.6);
\draw [color=zzttqq] (-2.66,1)-- (-1.16,-1.6);
\draw [color=zzttqq] (-1.16,-1.6)-- (0.34,1);
\draw [color=zzttqq] (0.34,1)-- (-2.66,1);
\draw [color=zzttqq] (1.84,-1.6)-- (4.84,-1.6);
\draw [color=zzttqq] (4.84,-1.6)-- (3.34,1);
\draw [color=zzttqq] (3.34,1)-- (1.84,-1.6);
\draw [color=zzttqq] (0.34,1)-- (1.84,-1.6);
\draw [color=zzttqq] (1.84,-1.6)-- (3.34,1);
\draw [color=zzttqq] (3.34,1)-- (0.34,1);
\draw (-0.16,1.7) node[anchor=north west] {$L_1$};
\draw (-1,0.1) node[anchor=north west] {$L_2$};
\draw (-2.45,-0.24) node[anchor=north west] {$L_3$};
\draw (-4,0.1) node[anchor=north west] {$L_4$};
\draw (-2.81,-1.55) node[anchor=north west] {$L_5$};
\draw (1.13,0.1) node[anchor=north west] {$L_6$};
\draw (2.5,-0.24) node[anchor=north west] {$L_7$};
\draw (4.1,0.1) node[anchor=north west] {$L_8$};
\draw (3.15,-1.55) node[anchor=north west] {$L_9$};
%\draw (-4.3,-1.55) node[anchor=north west] {0};
%\draw (4.6,-1.55) node[anchor=north west] {1};
%\begin{scriptsize}
\fill [color=uuuuuu] (-4.16,-1.6) circle (1.5pt);
\draw[color=uuuuuu] (-4.16,-1.8) node {$0$};
\fill [color=uuuuuu] (4.84,-1.6) circle (1.5pt);
\draw[color=uuuuuu] (4.85,-1.8) node {$1$};
\fill [color=uuuuuu] (0.34,6.19) circle (1.5pt);
\draw[color=uuuuuu] (0.25,6.35) node {$e^{\pi i/3}$};
\fill [color=uuuuuu] (0.34,1) circle (1.5pt);
\draw[color=uuuuuu] (0.51,0.8) node {$z_0$};
%\end{scriptsize}
\end{tikzpicture}
}
\caption{The open set and its first-level iterates for the von Koch snowflake curve.}
\label{figurevonkoch}
\end{figure}

To summarize: both endpoints $x_1,x_2$ of the line segment $L$ described by conditions (i) and (ii) are contained in $\bigcup_a \del(u_a(W))$; moreover, $L$ is not contained in $u_a(W)$ for any $a$. Let $L_1,\ldots,L_9$ be as in Figure \ref{figurevonkoch}, so that $\bigcup_a \del(u_a(W)) = \bigcup_1^9 L_i$. We now consider separately:
\begin{itemize}
\item[Case 1:] $x_1,x_2$ are contained in the same line segment $L_i$ for some $i$. In this case, we observe that since the intersection of $\Lambda$ with the $x$-axis is precisely the Cantor set, the intersection of $\Lambda$ with these line segments will be the union of finitely many images of the Cantor set under similarities. Since such a union cannot contain a line segment, this is a contradiction.
\item[Case 2:] $x_1,x_2$ lie on opposite sides of the line $L_{10} = \{\Re[z] = 1/2\}$. In this case, we observe that $\Lambda\cap L_{10}$ is a singleton $\{z_0\}$, where $z_0 = \frac 13 + \frac 13 e^{2\pi i/3}$ as in Figure \ref{figurevonkoch}. Since $L$ is connected, we must have $z_0\in L$. But since no point in $\Lambda$ has imaginary part greater than the imaginary part of $z_0$, the line $L$ must be horizontal. Thus $L\subset L_1$, and we are reduced to the first case.
\item[Case 3:] $x_1,x_2$ are contained in different line segments, and lie on the same side of $L_{10}$. In this case, without loss of generality we may assume that $x_1,x_2$ lie on the left hand side of $L_{10}$. Now if $\{x_1,x_2\}\subset L_1\cup L_2\cup L_3$ or $\{x_1,x_2\}\subset L_3\cup L_4\cup L_5$, then we would have $L\subset u_1(W)$ or $L\subset u_2(W)$, respectively. Either is a contradiction, so $\{x_1,x_2\}\nsubset L_1\cup L_2\cup L_3$ and $\{x_1,x_2\}\nsubset L_3\cup L_4\cup L_5$. It follows (after possibly swapping $x_1$ and $x_2$) that $x_1\in L_1\cap L_2\butnot L_3$ and $x_2\in L_4\cup L_5\butnot L_3$. In particular, we see that $L$ can be written as a union $L = L^1\cup\{y\}\cup L^2$, where $L^a\subset u_a(W)$ and $y\in L_3$. Now for one of $a = 1,2$, the length of $L^a$ is at least half of the length of $L$. But then $u_a^{-1}(L^a)$ is longer than $L$, contradicting (ii).
\end{itemize}

\ignore{

\section{Other results}

\begin{theorem}
\label{theoremkhinchinextrinsic}
Let $M$ and $\psi$ be as in Theorem \ref{theoremkhinchinambient}. If \eqref{khinchin} diverges, then $W_\psi^\ext$ has full $\lambda_M$-measure.
\end{theorem}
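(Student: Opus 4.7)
The plan is to deduce the extrinsic Khinchin statement from its ambient counterpart by showing that the intrinsic ``bad set'' is $\lambda_M$-null, so that ambient and extrinsic $\psi$-approximability agree modulo measure zero. The starting observation is the pigeonhole identity
\[
W_\psi^{\text{amb}} \;=\; W_\psi^{\int} \cup W_\psi^{\ext},
\]
valid because any point admitting infinitely many ambient $\psi$-approximants must admit infinitely many of at least one of the two types. Theorem \ref{theoremkhinchinambient} together with the divergence hypothesis on \eqref{khinchin} gives $\lambda_M(W_\psi^{\text{amb}}) = \lambda_M(M)$, so it suffices to establish $\lambda_M(W_\psi^{\int}) = 0$.

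For this I would apply a Borel--Cantelli argument to the level sets
\[
A_{q,K} \;:=\; \bigcup_{\substack{\pp \in \Z^d \\ \pp/q \in M \cap K}} B_M\!\left(\tfrac{\pp}{q}, \psi(q)\right),
\]
where $K \subset M$ is an arbitrary compact piece; since $M$ is $\sigma$-compact, it suffices to handle each such $K$, in which case $W_\psi^{\int} \cap K \subset \limsup_q A_{q,K}$. Using Ahlfors regularity of $\lambda_M$ of exponent $n$ on $M$ (the manifold dimension, or the Hausdorff dimension of the IFS limit set), we have $\lambda_M(B_M(z,r)) \lesssim r^n$, hence $\lambda_M(A_{q,K}) \lesssim N_K(q)\, \psi(q)^n$, where $N_K(q)$ counts rationals of denominator $q$ lying in $K \cap \Q^d$. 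The task reduces to showing $\sum_q N_K(q)\, \psi(q)^n < \infty$.

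The main obstacle is the counting bound on $N_K(q)$. For the real-analytic manifolds within the scope of Theorem \ref{theoremkhinchinambient} (typically nondegenerate in the sense of Kleinbock--Margulis), one can extract $N_K(q) \ll q^{n - \delta}$ for some $\delta > 0$ from Pila--Wilkie type estimates or direct algebraic counting; since the divergence hypothesis forces $\psi$ to decay, morally, no faster than $q^{-(d+1)/d}$, the product $N_K(q)\psi(q)^n$ is dominated by $q^{-n/d - \delta}$ and the series converges. For IFS limit sets of Hausdorff dimension strictly less than $d$, the analogous sparsity of rationals follows from the self-similar structure. A cleaner route, which I would attempt first, is simply to cite an already-established intrinsic Khinchin convergence theorem in the relevant setting; once $\lambda_M(W_\psi^{\int}) = 0$ is in hand, combining it with the ambient full-measure statement via the partition above immediately yields the theorem.
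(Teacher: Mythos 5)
Your high-level strategy coincides exactly with the paper's: decompose $W_\psi = W_\psi^{\int}\cup W_\psi^{\ext}$, invoke the ambient Khinchin theorem (Theorem \ref{theoremkhinchinambient}) to get $\lambda_M(W_\psi)$ full, and then argue that $\lambda_M(W_\psi^{\int})=0$. Your ``cleaner route'' at the end---cite an intrinsic convergence theorem---is literally what the paper does, quoting \cite[Theorem 5.5]{FKMS} in the form of Theorem \ref{theoremkhinchinintrinsic}.

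However, there is a real gap in the middle step, and it reveals a missing hypothesis. The statement you were handed omits condition \eqref{psiqbound}, i.e.\ the requirement that $\psi(q)\leq C/q^\epsilon$ for some $C,\epsilon>0$; the paper's actual Theorem \ref{theoremkhinchinextrinsic} includes it, and it is not a cosmetic hypothesis. Without it, the nullity of $W_\psi^{\int}$ can fail. For instance, take $M$ to be the unit circle and $\psi(q)=1/\log q$: then $\sum_q\psi(q)^d$ diverges, but the circle has (on average) about one rational point per denominator, so the natural intrinsic Khinchin sum $\sum_q N(q)\psi(q)/q \asymp \sum_q 1/(q\log q)$ also diverges, and one expects $W_\psi^{\int}$ to have \emph{full} measure. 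In that regime the decomposition argument cannot conclude anything about $W_\psi^{\ext}$. Your Borel--Cantelli sketch also has the numerology running in the wrong direction: divergence of $\sum\psi(q)^d$ forces $\psi$ to decay no \emph{faster} than roughly $q^{-1/d}$, which is a \emph{lower} bound on $\psi(q)^n$ and so makes the convergence of $\sum_q N_K(q)\psi(q)^n$ harder to achieve, not easier; the claimed bound $N_K(q)\psi(q)^n \ll q^{-n/d-\delta}$ does not follow from what you have written, and the counting estimate $N_K(q)\ll q^{n-\delta}$ is itself nowhere near strong enough, nor is it justified in the generality claimed. What saves the paper is precisely \eqref{psiqbound}: it places $\psi$ under some power decay, which is exactly the hypothesis needed for \cite[Theorem 5.5]{FKMS} to give $\lambda_M(W_\psi^{\int})=0$.

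One more scope issue: the paper restricts Theorem \ref{theoremkhinchinextrinsic} to real-analytic manifolds, explicitly noting that the fractal case is out of reach because neither the ambient Khinchin theorem nor an adequate intrinsic theory is available there. Your sketch treats IFS limit sets on the same footing, which overstates what is known.
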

\begin{proof}
We begin by recalling the notation of \cite{FKMS}. For each $n,m\in\N$ let
\[
[n,m] = \binom{n + m}{m} = \binom{m + n}{n}.
\]
For each $k,d\in\N$, let $n_{k,d}\in\N$ be maximal such that
\begin{equation}
\label{mnkddef}
d = k + [k - 1,2] + \ldots + [k - 1,n_{k,d}] + m_{k,d}
\end{equation}
for some $m_{k,d}\geq 0$, and let $m_{k,d}$ be the unique integer satisfying \eqref{mnkddef}. Let
\[
N_{k,d} = k + 2[k - 1,2] + \ldots + n_{k,d}[k - 1,n_{k,d}] + (n_{k,d} + 1)m_{k,d},
\]
We observe that $N_{k,d} > d$ whenever $k < d$.
\begin{theorem}[{\cite[Theorem 5.5]{FKMS}}]
\label{theoremkhinchinintrinsic}
Fix $d\geq 2$, and let $M\subset\R^d$ be a nondegenerate real-analytic submanifold of dimension $k$. Then the set
\[
\VWA_M := \left\{\xx\in M : \exists \epsilon > 0 \;\; \exists^\infty \pp/q\in\Q^d\cap M \;\; \left\|\xx - \frac{\pp}{q}\right\| < \frac{1}{q^{(d + 1)/N_{k,d} + \epsilon}}\right\}
\]
has zero $\lambda_M$-measure.
\end{theorem}
In the notation of Section \ref{sectionotherresults},
\[
\VWA_M = \bigcup_{c > \frac{d + 1}{N_{k,d}}} W_{\psi_c}^\int,
\]
where $\psi_c(q) = q^{-c}$.

Now let $\psi$ be an approximation function, and suppose that \eqref{khinchin} diverges. If $\psi\leq K\psi_c$ for some $c > \frac{d + 1}{N_{k,d}}$ and $K > 0$, then $W_\psi^\int \subset \VWA_M$ has zero $\lambda_M$-measure by Theorem \ref{theoremkhinchinintrinsic}, and $W_\psi$ has full $\lambda_M$-measure by Theorem \ref{theoremkhinchinambient}.

On the other hand, suppose that $\psi \nleq K\psi_c$ for all $c > \frac{d + 1}{N_{k,d}}$ and $K > 0$. In particular, fix $\frac{d + 1}{N_{k,d}} < c_1 < c_2 < \frac{d + 1}{d}$ (this is possible since $N_{k,d} > d$). Then we have
\[
\psi(Q) > \psi_{c_1}(Q)
\]
for infinitely many $Q\in\N$. Fix such a $Q$. Since $\psi$ is decreasing by assumption, we have $\psi(q) > \psi_{c_1}(Q)$ for all $q < Q$. In particular,
\[
\sum_{q = 1}^\infty \min(\psi,\psi_{c_1})(q) \geq \sum_{q = 1}^Q \min(\psi_{c_1}(Q),\psi_{c_1}(q)) = \sum_{q = 1}^Q \psi_{c_1}(Q) = Q \psi_{c_1}(Q).
\]

\end{proof}

\begin{theorem}
\label{theoremjarnikextrinsic}
Let $M\subset\R^d$ be the zero set of the polynomials $P_1,\ldots,P_n:\R^d\to\R$, and assume that $P_1,\ldots,P_n$ have rational coefficients. Let $D$ be the maximum of the degrees of $P_1,\ldots,P_n$, and let $\psi(q) = q^{-D}$. Then
\[
\bigcap_{\epsilon > 0}W_{\epsilon\psi}^\ext = \emptyset.
\]
\end{theorem}
\begin{proof}
Without loss of generality assume that $P_1,\ldots,P_n$ have integral coefficients. By contradiction, suppose there exists $\xx\in \bigcap_{\epsilon > 0}W_{\epsilon\psi}^\ext$. Fix $\epsilon > 0$ to be determined, and suppose that $\dist(\pp/q,\xx)\leq \epsilon\psi(q) = \epsilon q^{-D}$ for some $\pp/q\in\Q^d\butnot M$. Since the derivatives $P_1',\ldots,P_n'$ are bounded in a neighborhood of $\xx$, we have
\[
|P_i(\pp/q)| \lesssim \epsilon q^{-D} \all i = 1,\ldots,n.
\]
On the other hand, direct calculation shows that $P_i(\pp/q)$ is a rational number of denominator no greater than $q^D$. Thus if $\epsilon$ is sufficiently small, then
\[
P_i(\pp/q) = 0 \all i = 1,\ldots,n.
\]
But then $\pp/q\in M$, a contradiction.
\end{proof}
}% end ignore

\section{Metrical extrinsic approximation}
\label{sectionotherresults}

Theorem \ref{theoremdirichlet} gives an analogue of Dirichlet's theorem in the setting of extrinsic approximation. It is reasonable to ask whether analogues of the other classical theorems of Diophantine approximation, namely the Jarn\'ik--Schmidt, Khinchin, and Jarn\'ik--Besicovitch theorems (see e.g. \cite[Theorem III.2A]{Schmidt3} and \cite[Theorems 1.10 and 5.2]{Bugeaud}), also hold. In the case of the Jarn\'ik--Schmidt theorem, an extrinsic version can be deduced immediately from the ambient version, and in the case of Khinchin's theorem, an extrinsic version can be deduced from the ambient version together with a statement regarding intrinsic approximation which was proven in \cite{FKMS}. Finally, the Jarn\'ik--Besicovitch theorem is more subtle, and does not admit an extrinsic analogue with the same level of generality. We comment on this phenomenon in \6\ref{Remarks on the Jarnik--Besicovitch theorem} below.

\subsection{An analogue of the Jarn\'ik--Schimidt theorem}
The analogue of the Jarn\'ik--Schmidt theorem for ambient approximation on fractals and manifolds is the following:

\begin{theorem}[{\cite[Theorem 1.1]{BFKRW}} (cf. {\cite[Proposition 3.1]{BFS1}}) for fractals, {\cite[Theorem 1]{Beresnevich_BA}} for manifolds]
\label{theoremjarnikschmidtambient}
Fix $d\in\N$, and let $S\subset\R^d$ be either
\begin{itemize}
\item[(1)] the limit set of an iterated function system, or
\item[(2)] a real-analytic manifold.
\end{itemize}
Assume that $S$ is not contained in any proper affine subspace of $\R^d$. If
\[
\BA_d = \{\xx\in \R^d : \exists C > 0 \text{ for which \eqref{extrinsic} does not hold for any $\pp/q\in\Q^d$}\},
\]
then $\BA_d\cap S$ has full Hausdorff dimension in $S$.
\end{theorem}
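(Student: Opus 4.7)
The plan is to handle the two cases separately, since fractals and manifolds call for rather different machinery.

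For case (1), I would reduce the statement to the fact, due to Broderick–Fishman–Kleinbock–Reich–Weiss, that $\BA_d$ is hyperplane absolute winning in $\R^d$: Alice wins a variant of Schmidt's game in which Bob chooses nested balls and Alice is allowed to delete an $\epsilon$-neighborhood of an arbitrary affine hyperplane at each stage. The strategy is then to transplant this game onto the limit set $\Lambda$. Since $\Lambda$ is not contained in a proper affine subspace, for every sufficiently small ball $B$ meeting $\Lambda$ one can find an iterate $u_\omega(\Lambda)\subset B$ that misses a prescribed hyperplane neighborhood — this follows from the open-set-condition bookkeeping already used in Lemma \ref{lemmaOSC} together with the non-degeneracy of $\Lambda$. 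Running Alice's strategy inside this sub-IFS produces a Cantor subset of $\BA_d\cap\Lambda$ whose Hausdorff dimension approaches $\dim_H(\Lambda)$, which is what is wanted.

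For case (2), my approach is the Cantor-set construction used by Beresnevich in his proof of the manifold version of Schmidt's conjecture. The idea is to build a nested sequence of finite families of balls on $M$ whose survivors at every stage stay at definite distance from every rational affine hyperplane of bounded height. The key input is a quantitative transversality estimate: if $M$ is real-analytic and not contained in any hyperplane, then for every affine hyperplane $H$ and every small enough ball $B\subset M$, the set $M\cap B\cap H^{(\epsilon)}$ (an $\epsilon$-thickening) can occupy at most a controlled proportion of $M\cap B$. This is proven by a compactness and Taylor-expansion argument analogous to Claim \ref{claimrealanalytic}: since no hyperplane contains $M$ locally, each analytic defining function stays bounded away from zero on most of $B$, with quantitative rates. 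With such an estimate in hand, the standard Cantor machinery produces a subset of $\BA_d\cap M$ of Hausdorff dimension arbitrarily close to $\dim(M)$.

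The main obstacle I anticipate is the quantitative transversality step in case (2): one needs uniformity of the constants over the family of all affine hyperplanes, not just a single one, and this requires a careful use of real-analyticity (replacing an inductive Wronskian/curvature argument that would otherwise apply only to nondegenerate $\CC^k$ manifolds). In case (1), by contrast, the main work is really hidden in the absolute winning property of $\BA_d$, which is established elsewhere; the remaining task is essentially bookkeeping with the IFS, so I expect that to be the lighter half of the argument.
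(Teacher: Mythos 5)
The paper does not prove Theorem \ref{theoremjarnikschmidtambient} at all: it is imported verbatim from the literature, with the fractal case attributed to \cite[Theorem 1.1]{BFKRW} (cf.\ \cite[Proposition 3.1]{BFS1}) and the manifold case to \cite[Theorem 1]{Beresnevich_BA}, and then only the trivial containment $\BA_d\cap S\subset\BA^\ext$ is used in the paragraph that follows it. Your proposal is therefore a reconstruction of those external proofs rather than something that can be compared against an argument in this paper.

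As a sketch of the cited arguments, your outline is on target in its essentials. For case (1) you correctly identify the engine as the hyperplane absolute winning property of $\BA_d$ established in \cite{BFKRW}, combined with the fact that the attractor of a nondegenerate IFS can support the game; in the cited sources this bridge is phrased via the notion of a \emph{hyperplane diffuse} set rather than by ``finding a sub-IFS that dodges a hyperplane,'' but these are the same geometric observation and the Lemma \ref{lemmaOSC}-style bookkeeping you invoke is indeed what underlies the diffuseness of a nondegenerate self-similar set. For case (2) you correctly identify Beresnevich's Cantor construction with uniform transversality against rational hyperplanes as the mechanism. The one caution worth flagging is that both inputs you lean on — that $\BA_d$ is HAW, and that the transversality estimate holds uniformly over all affine hyperplanes for a nondegenerate analytic $M$ — constitute the bulk of the work in the respective source papers and are considerably more delicate than your two-sentence descriptions suggest; a Taylor-expansion argument ``analogous to Claim \ref{claimrealanalytic}'' gives transversality against a single fixed hyperplane but does not by itself deliver the uniformity over the full family that Beresnevich's construction requires. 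None of this is a fault in your plan, but it means the proposal is a map of where the machinery lives rather than a self-contained proof, which in fact mirrors the present paper's decision to cite the statement rather than reprove it.
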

The set $\BA_d$ is called the set of \emph{badly approximable} vectors.

We now claim that Theorem \ref{theoremjarnikschmidtambient} also implies an extrinsic analogue of the Jarn\'ik--Schmidt theorem. Indeed, the extrinsic analogue of $\BA_d\cap S$ is the set
\[
\BA^\ext := \{\xx\in S : \exists C > 0 \text{ for which \eqref{extrinsic} does not hold for any $\pp/q\in\Q^d\butnot S$}\},
\]
and it is a superset of $\BA_d\cap S$. So since $\BA_d\cap S$ has full Hausdorff dimension in $S$, so does $\BA^\ext$. This statement is what we refer to as the extrinsic analogue of the Jarn\'ik--Schmidt theorem.

\begin{remark}
The extrinsic analogue of the Jarn\'ik--Schmidt theorem can be viewed as demonstrating the optimality of Theorem \ref{theoremdirichlet}. Indeed, it demonstrates that for any function $\psi:\N\to(0,\infty)$ which decays faster than $q\mapsto q^{-(d + 1)/d}$, the statement which results from replacing the right hand side of \eqref{extrinsic} by $C\psi(q)$ in Theorem \ref{theoremdirichlet} cannot be true. See \cite{FishmanSimmons5} for a detailed discussion of such considerations.
\end{remark}

\subsection{An analogue of Khinchin's theorem}
When considering analogues of Khinchin's theorem, we consider only the case of manifolds. The case of fractals is more difficult, since the ambient analogue is not known; moreover, even if it were known, not enough is known about the intrinsic approximation theory of fractals to deduce an extrinsic version from a hypothetical ambient version.

The analogue of Khinchin's theorem for ambient approximation on manifolds is the following:

\begin{theorem}[{\cite[Theorem 2.3]{Beresnevich_Khinchin}}]
\label{theoremkhinchinambient}
Fix $d\geq 2$, and let $M$ be a real-analytic submanifold of $\R^d$ which is not contained in any proper affine subspace of $\R^d$. Let $\lambda_M$ denote Lebesgue measure on $M$. Then for any decreasing function $\psi:\N\to\R^+$, if the series
\begin{equation}
\label{khinchin}
\sum_{q\in\N}\psi(q)^d
\end{equation}
diverges,\footnote{One may also ask about the converse direction, namely whether the convergence of \eqref{khinchin} implies that $W_\psi$ has zero $\lambda_M$-measure. This is known in some cases; we refer to \cite{Beresnevich_Khinchin} for details.} then the set
\[
W_\psi := \left\{\xx\in M : \exists^\infty \pp/q\in\Q^d \;\; \left\|\xx - \frac{\pp}{q}\right\| < \frac{\psi(q)}{q}\right\}
\]
has full $\lambda_M$-measure.
\end{theorem}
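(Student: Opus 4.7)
The plan is to deduce the full-measure statement from an effective \emph{ubiquity} theorem for rational points near $M$, combined with the divergence Borel--Cantelli lemma packaged in the Beresnevich--Dickinson--Velani ubiquity framework. First I would reduce to a local problem: using a finite real-analytic atlas and the standard push-forward argument, it suffices to fix a compact piece of $M$ that, after an affine change of coordinates, is the graph of a real-analytic map $f\colon U\to\R^{d-k}$ over a bounded open set $U\subset\R^k$, and to prove that $\lambda_U$-almost every $\uu\in U$ satisfies $(\uu,f(\uu))\in W_\psi$. The hypothesis that $M$ is not contained in a proper affine subspace together with real analyticity passes to a dense open subset of $U$ as \emph{nondegeneracy} of $f$: the partial derivatives of $f$ up to some finite order span $\R^d$ at almost every point.

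The heart of the proof is to establish a ubiquity statement: there exist constants $c,\kappa>0$ such that for every sufficiently small ball $B\subset M$ and every sufficiently large $Q$, one has
\begin{equation*}
\lambda_M\Bigl(B\cap\bigcup_{Q/2<q\le Q}\bigcup_{\pp\in\Z^d} B\bigl(\pp/q,\,cQ^{-1-1/d}\bigr)\Bigr)\ \geq\ \kappa\,\lambda_M(B).
\end{equation*}
I would prove this via the Kleinbock--Margulis theorem on quantitative nondivergence of unipotent trajectories in the space of unimodular lattices: the Dani correspondence reformulates the existence of a rational $\pp/q$ with $q\asymp Q$ and $\|\xx-\pp/q\|\le cQ^{-1-1/d}$ as the lattice $g_t T_\xx \Z^{d+1}$ (in the notation of the proof of Lemma \ref{lemmagoodpair}) meeting a fixed cube. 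Nondegeneracy of $f$ makes the map $\uu\mapsto g_t T_{(\uu,f(\uu))}\Z^{d+1}$ a $(C,\alpha)$-good curve in the sense of Kleinbock--Margulis, and quantitative nondivergence forces the preimage of lattices \emph{without} such a short vector to have measure bounded away from $1$.

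Given ubiquity at the Dirichlet rate $Q^{-1-1/d}$, the divergence case of Khinchin's theorem follows by the abstract theorem of Beresnevich--Dickinson--Velani: if $\psi$ is decreasing and $\sum_{q}\psi(q)^d=\infty$, one dyadically decomposes the sum, uses monotonicity to pass from summation over individual $q$ to the dyadic blocks $Q/2<q\leq Q$, and applies ubiquity on each block to produce a sequence of sets of uniformly positive $\lambda_M$-measure each consisting of points approximable by \emph{some} $\pp/q$ in the block within $\psi(q)/q$. A quasi-independence estimate of Paley--Zygmund type for this sequence then yields that $\lambda_M$-almost every point lies in infinitely many of them, which is $W_\psi$.

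The main obstacle is the ubiquity statement at the sharp exponent for nondegenerate real-analytic manifolds of arbitrary codimension; this is where essentially all of the analytic and dynamical difficulty of Beresnevich's theorem is concentrated, and I do not see a route that avoids Kleinbock--Margulis-type nondivergence. Once ubiquity is secured, the passage to the divergence Khinchin statement is purely measure-theoretic and uses only the monotonicity of $\psi$.
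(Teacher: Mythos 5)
The paper does not give a proof of this theorem: it is quoted verbatim as a citation of Beresnevich's result (\cite[Theorem 2.3]{Beresnevich_Khinchin}) and is used as a black box, in combination with Theorem \ref{theoremkhinchinintrinsic}, to deduce the extrinsic Khinchin-type statement Theorem \ref{theoremkhinchinextrinsic}. So there is no in-paper argument to compare yours against; the relevant question is only whether your sketch is a plausible reconstruction of Beresnevich's proof.

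Your high-level architecture --- localize to a graph $(\uu,f(\uu))$ over $U\subset\R^k$, pass the affine-nondegeneracy hypothesis to nondegeneracy of $f$, establish a ubiquity estimate at the Dirichlet exponent $Q^{-1-1/d}$, and feed it into the Beresnevich--Dickinson--Velani divergence machinery --- is indeed the architecture of the actual proof. But the route you propose to the ubiquity estimate has a gap. Kleinbock--Margulis quantitative nondivergence bounds from above the $\lambda_M$-measure of $\xx$ for which $g_tT_\xx\Z^{d+1}$ has a vector shorter than $\epsilon$; combined with Dirichlet (which gives $\lambda_1\le 1$ unconditionally), this shows most $\xx$ have $\lambda_1\in[\epsilon,1]$. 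That, however, does \emph{not} place the last coordinate of the witnessing vector in the dyadic band $(Q/2,Q]$: the short vector could perfectly well have $|q|\ll Q$, in which case $\pp/q$ is not a new rational at scale $Q$ and contributes nothing to the quasi-independent sequence of sets the BDV framework needs. Producing rationals with $q\asymp Q$ in a fixed positive proportion of every ball --- the genuine lower-bound counting result for rational points near a nondegenerate manifold --- is exactly the deep content of Beresnevich's Annals paper, and his proof of that counting statement is a delicate inductive/geometric argument (not a direct consequence of KM nondivergence, which is an upper-bound tool naturally suited to the convergence/extremality direction). As stated, your reduction to ``lattice meets a fixed cube'' plus nondivergence does not close this gap, and it is the step where the real work lives.
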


The set $W_\psi$ is called the the set of \emph{$\psi$-approximable} points. Analogously, we define the set of \emph{$\psi$-intrinsically approximable} and \emph{$\psi$-extrinsically approximable} points
\begin{align*}
W_\psi^\int &:= \left\{\xx\in M : \exists^\infty \pp/q\in\Q^d\cap M \;\; \left\|\xx - \frac{\pp}{q}\right\| < \frac{\psi(q)}{q}\right\}\\
W_\psi^\ext &:= \left\{\xx\in M : \exists^\infty \pp/q\in\Q^d\butnot M \;\; \left\|\xx - \frac{\pp}{q}\right\| < \frac{\psi(q)}{q}\right\}.
\end{align*}
Obviously, $W_\psi = W_\psi^\int\cup W_\psi^\ext$. In particular, if $W_\psi$ has full $\lambda_M$-measure but $W_\psi^\int$ has zero $\lambda_M$-measure, then $W_\psi^\ext$ has full $\lambda_M$-measure. On the other hand, we have the following:

\begin{theorem}[Corollary of {\cite[Theorem 5.5]{FKMS}}]
\label{theoremkhinchinintrinsic}
Let $M$ and $\psi$ be as in Theorem \ref{theoremkhinchinambient}, and suppose that there exist $C,\epsilon > 0$ such that
\begin{equation}
\label{psiqbound}
\psi(q) \leq \frac{C}{q^\epsilon} \all q\in\N.
\end{equation}
Then  the set $W_\psi^\int$ has zero $\lambda_M$-measure.
\end{theorem}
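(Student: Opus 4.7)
The plan is to derive Theorem \ref{theoremkhinchinintrinsic} directly from \cite[Theorem 5.5]{FKMS} via a single set-containment; the substantive content — the sharp bound on how well points of $M$ can be approximated by \emph{intrinsic} rationals — is already done in that theorem, which I would treat as a black box.

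First I would translate the polynomial decay hypothesis into an approximation rate. If $\xx \in W_\psi^\int$, then by definition there are infinitely many intrinsic rationals $\pp/q \in \Q^d\cap M$ with $\|\xx - \pp/q\| < \psi(q)/q \leq C/q^{1+\epsilon}$. Fixing any $\epsilon' \in (0,\epsilon)$, one has $C/q^{1+\epsilon} \leq 1/q^{1+\epsilon'}$ for all sufficiently large $q$, so all but finitely many of these approximations also satisfy $\|\xx - \pp/q\| < q^{-(1+\epsilon')}$. Consequently,
\[
W_\psi^\int \;\subseteq\; \bigl\{\xx \in M : \exists^\infty \pp/q \in \Q^d\cap M,\ \|\xx - \pp/q\| < q^{-(1+\epsilon')}\bigr\}.
\]

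Second I would invoke \cite[Theorem 5.5]{FKMS}, which (for a non-degenerate real-analytic submanifold $M$ of $\R^d$) identifies an intrinsic Dirichlet exponent $\tau^\ast = \tau^\ast(M)$ and shows that the set of points admitting infinitely many intrinsic approximations at any rate strictly better than $q^{-\tau^\ast}$ has zero $\lambda_M$-measure. A short combinatorial calculation using the explicit formula for $\tau^\ast$ given in \cite{FKMS} verifies that $\tau^\ast \leq 1$ for every non-degenerate $M$. Since $1 + \epsilon' > 1 \geq \tau^\ast$, the enclosing set in the display is $\lambda_M$-null, and hence so is $W_\psi^\int$.

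The main obstacle is purely notational: matching our exponent $1 + \epsilon'$ against the FKMS threshold $\tau^\ast$, which reduces to the elementary estimate $\tau^\ast \leq 1$. The analytic core — the null-set conclusion for intrinsically very well approximable points, proved in \cite{FKMS} via homogeneous dynamics on $SL_{d+1}(\R)/SL_{d+1}(\Z)$ — is the genuine work and is not reproven here.
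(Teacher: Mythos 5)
Your proposal is correct and is essentially the argument the paper has in mind: the paper states the result only as ``Corollary of [FKMS, Theorem 5.5]'' without spelling it out, but the intended derivation is precisely the inclusion $W_\psi^\int \subseteq \VWA_M$ followed by the numerical observation (present in a commented-out draft of this section) that the FKMS threshold exponent $(d+1)/N_{k,d}$ satisfies $N_{k,d} > d$, hence $N_{k,d} \ge d+1$, hence $(d+1)/N_{k,d} \le 1 < 1+\epsilon'$, exactly your bound $\tau^\ast \le 1$.
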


Combining Theorems \ref{theoremkhinchinambient} and \ref{theoremkhinchinintrinsic}, we have the following:

\begin{theorem}
\label{theoremkhinchinextrinsic}
Let $M$ and $\psi$ be as in Theorem \ref{theoremkhinchinambient}. Suppose that \eqref{khinchin} diverges, and also that there exist $C,\epsilon > 0$ such that \eqref{psiqbound} holds. Then $W_\psi^\ext$ has full $\lambda_M$-measure.
\end{theorem}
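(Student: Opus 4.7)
The plan is to deduce this statement immediately by combining the two theorems that precede it, since essentially all the work has been done elsewhere. By Theorem~\ref{theoremkhinchinambient}, the divergence of $\sum_{q}\psi(q)^d$ together with the hypotheses on $M$ and $\psi$ guarantees that the set $W_\psi$ of $\psi$-approximable points has full $\lambda_M$-measure. By Theorem~\ref{theoremkhinchinintrinsic}, the polynomial upper bound $\psi(q)\le C/q^\epsilon$ in \eqref{psiqbound} ensures that the set $W_\psi^\int$ of $\psi$-intrinsically approximable points has zero $\lambda_M$-measure.

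From the definitions we have the set-theoretic identity $W_\psi = W_\psi^\int \cup W_\psi^\ext$, so
\[
W_\psi^\ext \supset W_\psi \setminus W_\psi^\int.
\]
The right-hand side is the difference of a full-measure set and a null set, hence has full $\lambda_M$-measure; therefore so does $W_\psi^\ext$.

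There is no real obstacle, since both ingredients are quoted black-boxes. The only thing to be careful about is to state clearly why the hypotheses of each of the two input theorems are met by the hypotheses we assume here, and to note that the decomposition $W_\psi = W_\psi^\int \cup W_\psi^\ext$ is just a rephrasing of the partition of $\Q^d$ into rationals lying on $M$ and rationals lying off $M$. Once those points are set down, the argument is a one-line measure-theoretic subtraction, so the exposition can be kept very brief.
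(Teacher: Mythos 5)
Your argument is correct and is precisely the one the paper intends: the preceding paragraph in the paper already notes that $W_\psi = W_\psi^\int \cup W_\psi^\ext$ and that full measure of $W_\psi$ plus null measure of $W_\psi^\int$ forces $W_\psi^\ext$ to be full, and then simply states the theorem as a combination of Theorems~\ref{theoremkhinchinambient} and~\ref{theoremkhinchinintrinsic}. There is no divergence from the paper's reasoning.
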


\begin{remark}
In this context, the condition \eqref{psiqbound} is quite reasonable. Indeed, by Theorem \ref{theoremdirichlet}, we have
\begin{equation}
\label{dirichletM}
M = \bigcup_{C > 0}W_{C\psi_{1/d}}^\ext,
\end{equation}
where $\psi_c(q) := q^{-c}$. Intuitively, this means that the question about the Lebesgue measure of $W_\psi^\ext$ mostly makes sense if $\psi$ decays faster than $\psi_{1/d}$; if $\psi$ decays more slowly than $\psi_{1/d}$, then \eqref{dirichletM} implies that $W_\psi^\ext = M$, and so clearly $W_\psi^\ext$ has full measure in this case.
\end{remark}

\subsection{Remarks on the Jarn\'ik--Besicovitch theorem}
\label{Remarks on the Jarnik--Besicovitch theorem}
One may ask whether the above techniques can be used to prove an extrinsic analogue of the Jarn\'ik--Besicovitch theorem. Again, we consider only the case of manifolds. An analogue of the Jarn\'ik--Besicovitch theorem for ambient approximation on manifolds is proven in \cite[Theorem 2.5]{Beresnevich_Khinchin}; we omit the statement for conciseness, although we remark that it only applies to functions $\psi$ which decay more slowly than a fixed function $\psi_0$. However, it seems that not enough is known about intrinsic approximation on manifolds to deduce an extrinsic corollary.

To get an idea of what an extrinsic analogue of the Jarn\'ik--Besicovitch theorem should look like, we will comment on a well-known example. Let $\HD(S)$ denote the Hausdorff dimension of the set $S$.
\begin{theorem}[{\cite[Corollary 2]{BDV2}} and {\cite[Theorem 1]{DickinsonDodson}}]
Let $M$ be the unit circle in $\R^2$. Fix $c > 1/2$, and let $\psi_c(q) = q^{-c}$. Then
\begin{equation}
\label{jarnik}
\HD(W_{\psi_c}) = \begin{cases}
\frac{2 - c}{1 + c} & c \leq 1\\
\frac{1}{1 + c} & c \geq 1
\end{cases}.
\end{equation}
\end{theorem}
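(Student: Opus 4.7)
My plan is to decompose
\[
W_{\psi_c} = W_{\psi_c}^{\int} \cup W_{\psi_c}^{\ext}
\]
into its intrinsic and extrinsic pieces, compute $\HD$ of each separately, and take the maximum. I expect
\[
\HD(W_{\psi_c}^{\ext}) = \begin{cases}\frac{2-c}{1+c} & c \leq 1,\\ 0 & c > 1,\end{cases} \qquad \HD(W_{\psi_c}^{\int}) = \frac{1}{1+c},
\]
whose pointwise maximum matches \eqref{jarnik}; note the two formulas agree at the crossover $c=1$ with common value $1/2$, so the picture is consistent.

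\textbf{Extrinsic part.} For $c > 1$ the Dickinson-Dodson type lemma mentioned in the introduction applies with the degree-$2$ rational polynomial $x_1^2+x_2^2-1$ that cuts out $M$, giving $\dist(\pp/q, M) \gtrsim q^{-2}$ for every $\pp/q \in \Q^2 \butnot M$. Hence $\|\xx-\pp/q\| < q^{-1-c}$ can hold for only finitely many denominators when $c>1$, and $W_{\psi_c}^{\ext}$ is empty. For $c \leq 1$ I would execute the standard ambient Jarn\'ik-Besicovitch argument for the circle. The upper bound is a Hausdorff-Cantelli cover by balls $B(\pp/q,q^{-1-c})$ centered at lattice rationals in the annulus $\{|\,\|\xx\|-1| \leq q^{-1-c}\}$; the number of such $\pp/q$ of denominator $q$ is $\asymp q^{1-c}$ by a Gauss-circle calculation (the annulus has area $\asymp q^{-1-c}$ and lattice density $q^2$), and the resulting $s$-dimensional sum
\[
\sum_q q^{1-c}\cdot q^{-s(1+c)}
\]
converges precisely when $s > (2-c)/(1+c)$. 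The matching lower bound I would obtain from the Beresnevich-Velani mass transference principle applied to the ambient Khinchin-Groshev theorem on the circle (Theorem \ref{theoremkhinchinambient} supplies the divergence-side input at the Dirichlet rate $c=1/2$).

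\textbf{Intrinsic part and main obstacle.} For $W_{\psi_c}^{\int}$ I would use the Pythagorean parameterization
\[
\gamma(t) = \left(\frac{1-t^2}{1+t^2},\ \frac{2t}{1+t^2}\right),
\]
which is bi-Lipschitz on bounded subsets of $\R$ and sends a rational $t=a/b$ in lowest terms to a rational point on $M$ whose reduced denominator satisfies $q \asymp a^2+b^2 \asymp b^2$ (with a factor of $2$ correction when $a,b$ share a parity). Under $\gamma^{-1}$, the inequality $\|\xx-\pp/q\| < q^{-1-c}$ translates into $|t-a/b| \lesssim b^{-2(1+c)}$, so $\gamma^{-1}(W_{\psi_c}^{\int})$ is, up to null sets, the classical one-dimensional $\tau$-well-approximable set at exponent $\tau=2(1+c)$, of Hausdorff dimension $2/\tau = 1/(1+c)$ by Jarn\'ik-Besicovitch in $\R$. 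The main obstacle I anticipate is the extrinsic \emph{lower} bound when $c<1$: one must produce a Cantor-like subset of $W_{\psi_c}^{\ext}$ of the full claimed dimension using lattice points lying genuinely close to but not on the circle, which requires either the mass-transference machinery of Beresnevich-Velani or sharp Huxley-type distribution results for lattice points near smooth convex curves. A secondary technical point for the intrinsic step is checking that restricting $(a,b)$ to coprime pairs of opposite parity does not lose Hausdorff dimension, which follows from a standard density argument in dyadic scales. Combining both halves then yields the piecewise formula \eqref{jarnik}.
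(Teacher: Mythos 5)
The paper does not prove this result; it is simply cited from \cite[Corollary 2]{BDV2} and \cite[Theorem 1]{DickinsonDodson}, so there is no in-paper argument to compare against. Your overall outline does reflect the structure of the argument in those references: the Hausdorff--Cantelli upper bound from the heuristic count $N(q) \asymp q^{1-c}$ of rationals in the annulus of width $q^{-(1+c)}$, the Dickinson--Dodson obstruction lemma giving $W_{\psi_c}^\ext = \emptyset$ for $c > 1$, and the Pythagorean parameterization reducing the intrinsic dimension to the classical Jarn\'ik--Besicovitch theorem on $\R$ are all the correct ingredients, and the split at $c = 1$ (where $(2-c)/(1+c)$ and $1/(1+c)$ agree) is the right consistency check.

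The genuine gap is the lower bound for $1/2 < c < 1$: applying the mass transference principle to Theorem \ref{theoremkhinchinambient} does not work as you describe. The mass transference principle requires a \emph{fixed} collection of balls whose blown-up versions already cover $M$ in full measure. Here the arcs making up $W_{\psi_c}\cap M$ come only from rationals $\pp/q$ within $q^{-(1+c)}$ of the circle, whereas Theorem \ref{theoremkhinchinambient} at exponent $s(1+c)$ draws on the strictly larger family of rationals within $q^{-s(1+c)}$ of the circle; these extra centers contribute to the ``blown-up'' limsup but are absent from the target collection, so the hypothesis of the principle is not verified. One can see concretely that the mechanism is broken: taking $f(r) = r^s$ and reading off divergence of $\sum_q q^{2(1-s(1+c))}$ from Theorem \ref{theoremkhinchinambient} would yield the lower bound $s \le \tfrac{3}{2(1+c)}$, which for every $c > 1/2$ strictly exceeds $\tfrac{2-c}{1+c}$ and therefore contradicts your (correct) upper bound. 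The actual lower bound needs the \emph{two-sided} counting estimate $N(q)\asymp q^{1-c}$ for rational points near the circle fed into the ubiquity framework of Beresnevich--Dickinson--Velani; establishing that estimate (this is where the Huxley-type input belongs) is precisely the hard part of the theorem and the content of \cite{BDV2}. The parity/coprimality bookkeeping in the Pythagorean parameterization, which you also flag, is routine by comparison.
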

In fact, the ``phase transition'' which occurs here at $c = 1$ is due to a difference between intrinsic and extrinsic approximation. Specifically, we have the following:
\begin{theorem}
Let $M$ be the unit circle in $\R^2$. Fix $c > 0$, and let $\psi_c(q) = q^{-c}$. Then
\begin{equation}
\label{jarnikint}
\HD(W_{\psi_c}^\int) = \frac{1}{1 + c}
\end{equation}
while
\begin{equation}
\label{jarnikext}
\HD(W_{\psi_c}^\ext) = \begin{cases}
1 & c \leq 1/2\\
\frac{2 - c}{1 + c} & 1/2 \leq c < 1\\
0 & c > 1
\end{cases}.
\end{equation}
\end{theorem}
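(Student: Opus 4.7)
The plan is to split the theorem into the intrinsic identity \eqref{jarnikint} and the three regimes of \eqref{jarnikext}, using classical Jarn\'ik--Besicovitch on $\R$ for the former, and a mixture of Theorem \ref{theoremkhinchinextrinsic}, the ambient dimension formula for $W_{\psi_c}$ cited above, finite subadditivity of Hausdorff dimension, and the rationality obstruction of \cite{DickinsonDodson, Drutu, BDL} for the latter.

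For \eqref{jarnikint}, I would work in the stereographic chart $t = \tan(\theta/2) \leftrightarrow (x,y) = \bigl((1-t^2)/(1+t^2),\, 2t/(1+t^2)\bigr)$, under which $M$ minus a single point is locally bi-Lipschitz to $\R$ and rational points on $M$ correspond bijectively (up to a finite exceptional set) to rationals $t = a/b$ in lowest terms, with circle-denominator $q \asymp a^2 + b^2$. Restricting to a compact piece of $M$ away from $(-1,0)$ forces $|a| \lesssim |b|$ and hence $q \asymp b^2$, so the intrinsic condition $\|\xx - \pp/q\| < q^{-c-1}$ becomes $|t - a/b| < b^{-2(c+1)}$ in the chart. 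Applying the classical Jarn\'ik--Besicovitch theorem on $\R$ at exponent $\tau = 2(c+1) > 2$ then yields Hausdorff dimension $2/\tau = 1/(1+c)$, which is transferred back to $M$ by bi-Lipschitz invariance.

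For \eqref{jarnikext} I would split by $c$. When $c \leq 1/2$, Theorem \ref{theoremkhinchinextrinsic} applies (since $\sum q^{-2c}$ diverges and \eqref{psiqbound} holds with $\epsilon = c$), giving $W_{\psi_c}^\ext$ full Lebesgue measure and hence $\HD = 1$. When $1/2 < c < 1$, the upper bound $\HD(W_{\psi_c}^\ext) \leq (2-c)/(1+c)$ is immediate from $W_{\psi_c}^\ext \subseteq W_{\psi_c}$ and the ambient formula of \cite{BDV2}; the matching lower bound follows from $W_{\psi_c} = W_{\psi_c}^\int \cup W_{\psi_c}^\ext$ combined with \eqref{jarnikint}, since $\HD(W_{\psi_c}^\int) = 1/(1+c) < (2-c)/(1+c) = \HD(W_{\psi_c})$ forces $W_{\psi_c}^\ext$ to carry the full ambient dimension by finite subadditivity. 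When $c > 1$, I would invoke the rationality obstruction: $M = \{P = 0\}$ for $P(x,y) = x^2 + y^2 - 1 \in \Z[x,y]$, so every $\pp/q \in \Q^2 \butnot M$ satisfies $|P(\pp/q)| = |p_1^2 + p_2^2 - q^2|/q^2 \geq 1/q^2$, while $|P(\pp/q)| \lesssim \|\xx - \pp/q\|$ for $\pp/q$ near $M$ (by boundedness of $\nabla P$). This yields $\|\xx - \pp/q\| \gtrsim 1/q^2$, incompatible with $\|\xx - \pp/q\| < q^{-c-1}$ for $c > 1$ and $q$ large, so $W_{\psi_c}^\ext = \emptyset$ and $\HD = 0$.

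The main obstacle is producing the sharp lower bound in \eqref{jarnikint}. The covering upper bound is routine, but the lower bound requires either a direct Jarn\'ik--Cantor construction on $M$ or Jarn\'ik--Besicovitch applied to the restricted set of rationals $(a,b)$ arising from Pythagorean parameterization (those coprime with opposite parity); this restriction does not alter the dimension because the admissible $(a,b)$ have positive proportion among coprime pairs modulo any fixed modulus, but this needs to be checked carefully. The remainder of the argument is a routine consequence of results already available in the excerpt and in the cited literature.
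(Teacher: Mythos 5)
Your proposal follows the same overall skeleton as the paper's argument: establish the intrinsic dimension formula \eqref{jarnikint}, deduce \eqref{jarnikext} for $c<1$ from the ambient formula \eqref{jarnik} and \eqref{jarnikint} via $\HD(W_{\psi_c}) = \max(\HD(W_{\psi_c}^\int),\HD(W_{\psi_c}^\ext))$, and for $c>1$ use the rationality obstruction to conclude $W_{\psi_c}^\ext$ is empty. The differences are that the paper simply cites \cite[Theorem 2.13]{FKMS} for \eqref{jarnikint} where you re-derive it via stereographic projection and classical Jarn\'ik--Besicovitch (correctly flagging the subtlety that the admissible numerator/denominator pairs form a restricted class), and the paper cites \cite[Lemma 1]{DickinsonDodson} for $c>1$ where you spell out the $|p_1^2+p_2^2-q^2|\ge 1$ argument. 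One small simplification you could make: the paper handles all of $c<1$ uniformly via the max formula, since for $c\le 1/2$ Dirichlet already gives $W_{\psi_c}=M$ and hence $\HD(W_{\psi_c})=1$; invoking Theorem \ref{theoremkhinchinextrinsic} for the $c\le 1/2$ sub-case is correct but heavier than needed. Your proposal is sound and, modulo the care you already noted is required for the restricted Jarn\'ik--Besicovitch lower bound in \eqref{jarnikint}, would yield a self-contained proof in place of the paper's citations.
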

\begin{proof}
\eqref{jarnikint} is proven for example in \cite[Theorem 2.13]{FKMS}. When $c < 1$, \eqref{jarnikext} follows immediately from \eqref{jarnik} and \eqref{jarnikint}, since $\HD(W_{\psi_c}) = \max(\HD(W_{\psi_c}^\int),\HD(W_{\psi_c}^\ext))$. Finally, if $c > 1$ then \eqref{jarnikext} is a consequence of \cite[Lemma 1]{DickinsonDodson}.
\end{proof}
In particular, from the above example we see that the Hausdorff dimensions of $W_{\psi_c}$ and $W_{\psi_c}^\ext$ do not agree if $c$ is large enough. Thus for these values of $c$, an extrinsic analogue of the Jarn\'ik--Besicovitch theorem \emph{could not} be deduced directly from an ambient analogue.

\subsection{Open questions}
\label{subsectionopenquestions}

\begin{openquestion}
What is the correct generalization of Proposition \ref{propositionvonkoch}? More precisely, find a class of iterated function systems with the following properties:
\begin{itemize}
\item[(i)] It's easy to check whether or not any given IFS is in the class.
\item[(ii)] The von Koch snowflake is in the class.
\item[(iii)] No member of the class contains a line segment.
\end{itemize}
\end{openquestion}

\begin{openquestion}
Find an extrinsic analogue of the Jarn\'ik--Besicovitch theorem for some class of manifolds or fractals.
\end{openquestion}

\bibliographystyle{amsplain}

\bibliography{bibliography}

\end{document}